\documentclass[brochure,english,12pt]{smfbourbaki}

\usepackage[T1]{fontenc}
\usepackage{lmodern,amssymb,bm}

\usepackage{babel}

\usepackage[utf8]{inputenc}

\usepackage[colorlinks=true, linkcolor=blue, citecolor=red,
urlcolor=blue]{hyperref}
\usepackage[
backend=biber,
style=authoryear, 
citestyle=authoryear-comp,
maxnames=7,
sortcites=false %%%% to keep the order in \cite command
]{biblatex}
\usepackage{csquotes}

\newcommand{\Cl}{\mathrm{Cl}}
\newcommand{\Aut}{\mathrm{Aut}}
\newcommand{\Surj}{\mathrm{Surj}}
\newcommand{\Hom}{\mathrm{Hom}}
\newcommand{\Hur}{\mathrm{Hur}}
\newcommand{\Pic}{\mathrm{Pic}}
\newcommand{\GL}{\mathrm{GL}}
\renewcommand{\Pr}{\mathrm{Pr}}
\newcommand{\Sel}{\mathrm{Sel}}
\newcommand{\Gal}{\mathrm{Gal}}
\newcommand{\im}{\mathrm{im}}
\newcommand{\rank}{\mathrm{rank}}
\newcommand{\F}{\mathbf{F}}
\newcommand{\Z}{\mathbf{Z}}
\newcommand{\Q}{\mathbf{Q}}
\newcommand{\R}{\mathbf{R}}
\newcommand{\C}{\mathbf{C}}
\renewcommand{\P}{\mathbb{P}}
\newcommand{\ra}{\rightarrow}
\newcommand{\inj}{\hookrightarrow}
\newcommand{\OO}{\mathcal{O}}
\newcommand{\FF}{\mathcal{F}}
\newcommand{\EE}{\mathbb{E}}
\newcommand{\set}[1]{\{#1\}}

\newcommand{\tensor}{\otimes}

\newtoggle{tnbcbx@howcited}
\DeclareEntryOption[boolean]{howcited}[true]{\settoggle{tnbcbx@howcited}{#1}}
\DeclareBibliographyOption[boolean]{howcited}[true]{\settoggle{tnbcbx@howcited}{#1}}
\DeclareTypeOption[boolean]{howcited}[true]{\settoggle{tnbcbx@howcited}{#1}}

\newbibmacro{howcited}{%
  \iftoggle{tnbcbx@howcited}
    {\iffieldundef{shorthand}
       {}
       {\setunit{\addspace}%
        \printtext[parens]{%
          \bibstring{citedas}%
          \setunit{\addcolon\space}%
          \printfield{shorthand}%
          \setunit{\addslash}%
          }}}
    {}}

\renewbibmacro{finentry}{\usebibmacro{howcited}\finentry}

\DefineBibliographyStrings{french}{citedas    = {cité ci-dessus avec l'acronyme}}
\DefineBibliographyStrings{english}{citedas    = {heretofore cited as}}

\newcommand{\citelong}[1]{\citeauthor{#1}, \citeyear{#1}}
\newcommand{\textcitelong}[1]{\citeauthor{#1} (\citeyear{#1})}

\DefineBibliographyExtras{french}{\restorecommand\mkbibnamefamily}

\DeclareDelimFormat{nameyeardelim}{\addcomma\space}

\DeclareNameAlias{sortname}{given-family}

\renewcommand{\bibnamedash}{\leavevmode\raise3pt\hbox to3em{\hrulefill}\space}

\AtEveryBibitem{%
  \clearfield{issn} % Remove issn
  \clearfield{isbn} % Remove isbn
  \clearfield{doi} % Remove doi
  \clearlist{language} %%% remove language
  \ifentrytype{online}{}{% Remove url except for @online
  \ifentrytype{unpublished}{}{% ou bien @unpublished
    \clearfield{url}
  }
  }
}

\renewbibmacro{in:}{%
    \ifentrytype{article}{}{\printtext{\bibstring{in}\intitlepunct}}}

\DeclareFieldFormat[article,periodical,inreference]{number}{\mkbibparens{#1}}
\DeclareFieldFormat[article,periodical,inreference]{volume}{\mkbibbold{#1}}
\renewbibmacro*{volume+number+eid}{%
    \printfield{volume}%
    \setunit*{\addthinspace}% NEW
    \printfield{number}%
    \setunit{\addcomma\space}%
    \printfield{eid}}

\DeclareFieldFormat[article,inbook,incollection]{title}{\enquote{#1}\addcomma} 

\date{Mars 2026}
\bbkannee{78\textsuperscript{e} année, 2025--2026}  % Commandes spéciales
\bbknumero{1251}                                      % Commandes spéciales

\title{Recent Progress around Cohen--Lenstra Heuristics}
\subtitle{}

\author{Jordan Ellenberg}
\address{University of Wisconsin-Madison}
\email{ellenber@math.wisc.edu}

\begin{document}

\maketitle

%\section*{Introduction}

\section{Introduction}

The {\em Cohen-Lenstra heuristics} are a family of conjectures in number theory first formulated more than forty years ago in \textcite{cohenlenstra} by Henri Cohen and Hendrik Lenstra.  The conjectures, like so many other fundamental ideas in the history of number theory, were inspired by observations from experimental data.  For instance, Cohen and Lenstra observe that

\begin{quote}
    ``If $p$ is a small odd prime, the proportion of imaginary quadratic fields whose class number is divisible by $p$ seems to be significantly greater than $1/p$ (for instance $43\%$ for $p = 3$, $23.5\%$ for $p=5$).''
\end{quote}

What was novel, in 1983, is that much of the experimental data underlying the heuristics was obtained by machine computation, e.g. Duncan Buell's computation in \textcite{buell} of a million or so class groups of quadratic imaginary number fields, using FORTRAN code on a IBM 370/158.  (This computation, using modern methods, takes 16.3 seconds on a laptop.)

Cohen and Lenstra, in the first sentence of their paper, say of their conjectures that ``proofs seem out of reach at present,'' and that, by and large,  remains the state of things four decades later.  But the conjectures launched a remarkable body of work, serving as the foundation for a broad suite of more general heuristics; and in the last five years, we have seen rapid progress towards proofs of the conjectures in a much broader range of contexts than was previously attainable.  Our goal in the present exposition is to describe some of these current advances and to place them in the context of the broad world of heuristics of Cohen-Lenstra type, which I think are by now mature enough to be called ``the Cohen-Lenstra philosophy.''  Our discussion will center on three main advances: \parencite{lwzb} on the generalized Cohen-Lenstra-Martinet conjectures and their geometric analogues; \parencite{smith1,smith2}, which resolves the $2$-primary Cohen-Lenstra problem for both class groups of quadratic fields and Selmer groups in quadratic twist families; and the proof of Stevenhagen's conjecture on the negative Pell equation by \parencite{koymanspagano}.

\section{The Cohen-Lenstra philosophy}

We begin with the basics.  Let $K$ be a number field.  The {\em class group} $\Cl_K$ is defined to be the group of fractional ideals of $\OO_K$ modulo the group of principal fractional ideals.  The fact that this group is finite, a theorem of Dirichlet, is one of the foundation blocks of algebraic number theory; and this finite abelian group is thus an invariant of $K$ of central importance.  Its order is called the {\em class number} of $K$ and is denoted $h_K$.  

When $K$ is a quadratic imaginary field, the Dirichlet class number formula relates $h_K$ to a special value of an $L$-function; this makes the {\em size} of the class group an invariant that can be approached by analytic means.\footnote{To be more precise, it is the combination of $h_K$ and the {\em regulator} which has an analytic meaning; it is natural to consider these together, and there is another line of work in which one considers not only the class group but an ``Arakelov class group'' whose component group is $\Cl_K$ and whose identity component manifests the regulator; we will not explore this further here, but see e.g. \parencite{bjl}.}  The Brauer-Siegel theorem tells us that, in the quadratic imaginary case $K = \Q(\sqrt{-d})$, one has $\lim \log h_K / \log d = 1/2$ for any infinite sequence of $K$; so the asymptotic behavior of $h_K$ is roughly understood, although many questions certainly remain open.

The {\em structure} of $\Cl_K$ is a completely different story, and this is where the Cohen-Lenstra heuristics enter.  If $N$ is a positive integer, the $N$-torsion subgroup $\Cl_K[N]$ is a finite abelian group killed by $N$.  While the size of $\Cl_K$ grows with $K$, the same need not be the case for $\Cl_K[N]$; indeed, we do not even know whether $\dim_{\F_3} \Cl_K[3]$ is bounded as $K$ ranges over all quadratic imaginary fields (though as we will see below, we expect that it is not) and no quadratic field is known with $\dim_{\F_3} \Cl_K[3] > 8$.\footnote{As far as I know.  Elkies describes $8$ as the record in an email to the NMBRTHRY listserv on 5 Feb 2016.}

Cohen and Lenstra's invocation of the ``proportion of imaginary quadratic fields whose class number is divisible by $p$" requires a little explanation; the set of imaginary quadratic fields is countably infinite, so one has to be careful about what is meant by a ``random" such field.  Following Cohen and Lenstra, we interpret such statements in terms of averages over larger and larger ``boxes," as follows.

Choose an odd prime\footnote{We use $\ell$ rather than $p$ in this setting so that $p$ is available to be the characteristic when we discuss the function field case.} $\ell$ and write $\Cl_K[\ell^\infty]$ for the $\ell$-primary part of $\Cl_K$.  For any large real number $M$, we can define a random variable $X_M$ by choosing a quadratic imaginary field $K = \Q(\sqrt{-d})$ uniformly at random from the fundamental discriminants $d < M$, and then taking $X_M$ to be $\Cl_K[\ell^\infty]$.  This random variable provides a probability distribution $p_M$ on the isomorphism classes of finite abelian $\ell$-groups.  The Cohen-Lenstra philosophy is that the distributions $p_M$ converge to a limiting distribution $p$ as $M \ra \infty$, and that this distribution should be, in some sense, a natural one. We now describe the limiting distribution Cohen and Lenstra proposed in three ways.

\begin{enumerate}
\item (random object) For any finite abelian $\ell$-group $A$, the probability $p(A)$ is proportional to $|\Aut(A)|^{-1}$.  (This very familiar weighting can be thought of as saying that $A$ is an object chosen uniformly at random from the category of finite abelian $\ell$-groups with isomorphisms.)
\item (random equalizer) $p(A)$ is the limit, as $n \ra \infty$, of the probability that a uniformly random matrix $\gamma$ in $M_n(\Z_\ell)$ has $\Z_\ell^n / (\gamma - 1) \Z_\ell^n \cong A$.  This point of view was introduced in \parencite{friedmanwashington}.
\item (moments) If $A$ is a finite abelian $\ell$-group drawn from $p$, and $B$ is any fixed finite abelian $\ell$-group, the expected number of surjections from $A$ to $B$ is $1$.
\end{enumerate}

Each of these three descriptions has some implicit facts embedded in it.  For the first description, one needs the fact that the sum of $|\Aut(A)|^{-1}$ over all isomorphism classes of finite abelian $\ell$-groups is finite; indeed, it is an enjoyable exercise to check that it is 
\begin{equation}
\sum_A |\Aut(A)|^{-1} = \prod_{i \geq 1} (1 - \ell^{-i})^{-1}. \label{eq:sumaut}
\end{equation}

For the second, one needs the fact from the theory of random matrices that the specified limit actually exists.  For the third, one needs to know that specifying those expected values actually specifies the distribution $p$.  The expected value of $|\Surj(A,B)|$ where $A$ is a group drawn from a probability distribution is often called the {\em $B$'th moment} of that distribution, based on the following analogy; if $X$ is a random variable valued in finite sets, and $[k]$ is the set $\set{1,\ldots,k}$, then $|\Hom(X,[k])| = k^{|X|}$, so the expected value of $|\Hom(X,[k])|$ is a (non-integral) moment of the random variable $\exp(|X|)$ in the usual sense.\footnote{Restricting to surjections gives a different sequence of moments which carries the same information as the moments from Hom.}      As in more standard settings, it is an interesting and important question to understand when a distribution is determined by its moments; \textcite{wood:moments} provides an excellent survey of our state of knowledge about this problem for random variables valued in groups (though, as you will see, this state is still continuously developing.)

All three of these descriptions have been of use in various problems in the Cohen-Lenstra domain.  It is certainly not obvious that they give the same answer in the case currently under discussion.  Nor is it obvious from any of these descriptions what the numerical value $p(A)$ is for any particular finite abelian $\ell$-group $A$.  But from the computation \eqref{eq:sumaut}, we see that
$$
p(A) = |\Aut(A)|^{-1} \prod_{i \geq 1} (1 - \ell^{-i}). 
$$

In particular, the probability that $\Cl_K[\ell^\infty]$ is trivial -- equivalently, that $h_K$ is indivisible by $\ell$ --  is $\prod_{i \geq 1} (1 - \ell^{-i})$, which, as Cohen and Lenstra observed, is noticeably smaller than $1-1/\ell$.  When $\ell = 3$, the product above is about $0.56$, in good agreement with the probability of $3$-indivisibility of the class number observed by Cohen and Lenstra. 

In the other direction, Cohen-Lenstra heuristics predict that the probability that a quadratic imaginary field has $\dim_{\F_\ell} \Cl_K[\ell] = r$ is of order $\ell^{-r^2}$.  This rapid decrease explains why class groups are so often cyclic, and why class groups with high $\ell$-rank are in practice transuranically rare.

\section{Generalizations and related problems}

The Cohen-Lenstra philosophy has by now expanded and generalized in a wide range of directions.  In any context where one can define a group $G$ attached to an arithmetic object $X$, and where the arithmetic objects form a countably infinite set which is naturally ordered by some notion of height, one can consider the distribution of the isomorphism classes of $G_X$ as $X$ ranges over objects of height at most $M$.  The corresponding ''Cohen-Lenstra" problem is then to show that these probability distributions approach a limit, and to describe that limiting distribution, which one expects to be in some sense natural.

Even in their original formulation, the Cohen-Lenstra conjectures weren't intended to apply only to quadratic imaginary number fields; Cohen and Lenstra also propose heuristics for the class groups of totally real $A$-extensions of $\Q$, where $A$ is a fixed finite abelian group.  Shortly afterwards,  \parencite{cohenmartinet} proposed a whole family of conjectures, generally known as the Cohen-Lenstra-Martinet heuristics, which concern the variation of the $S$-primary part of class group for any set $S$ of primes, extensions of any degree and indeed any specified Galois group, over an arbitrary base number field $F$.  Over time, many subtleties emerged and it became clear that the original heuristics, while sound in spirit, were incorrect in a number of particulars.  In recent years, the situation has improved markedly, to the point where we now have fully satisfactory heuristics for the class groups of extensions of number fields with specified Galois group.  We report on this progress in Section~\ref{s:clm}.

One can also go beyond class groups.  For instance, the Selmer group of a random elliptic curve, or even a random abelian variety, over a number field $F$, has much in common with a class group, and indeed, \parencite{bklpr} formulates a conjecture for the variation of Selmer groups in families of abelian varieties which parallels the Cohen-Lenstra philosophy. We'll discuss some progress towards these conjectures in \S~\ref{s:smith}.

In another direction, one can ask about the maximal everywhere unramified extension in toto instead of the maximal abelian unramified extension; we'll discuss this in \S~\ref{s:nonabelian}.

The right level of generality for the base field $F$ should include not just number fields but general global fields; that is, we should admit the case where $F$ is the function field of a curve over a finite field.  Studying such cases brings Cohen-Lenstra problems in contact with questions in algebraic geometry and even topology; the function field case has enjoyed rapid recent progress lately, most notably in \parencite{landesmanlevy}, and has been a source of insight about Cohen-Lenstra problems for quite some time, and we will try to keep both cases in full view throughout our discussion.

One situation that has remained mostly mysterious is that of the $p$-part of the class group for extensions of a global field of characteristic $p$.  Here, the natural object of interest is not really a group at all, but a group {\em scheme} arising from the $p$-torsion of an abelian variety in characteristic $p$.  Little is known about this, but the very interesting recent result of \parencite{gartonthunder} about the $3$-torsion in hyperelliptic Jacobians in characteristic $3$ is worth noting here.

\section{Cohen-Lenstra-Martinet heuristics over general global fields: work of Liu, Sawin, Wang, Wood, Zureick-Brown, etc.}

\label{s:clm}

The Cohen-Lenstra-Martinet heuristics concern the following situation. Let $F$ be a number field, let $\Gamma$ be a finite group, let $S$ be a set of primes not including any divisors of $|\Gamma|$, and write $\Z_{(S)}$ for the localization of $\Z$ at $S$.  For every Galois extension $K/F$, the class group $\Cl_K \tensor_\Z \Z_{(S)}$ is a $\Z_{(S)}[\Gamma]$-module.  One may then ask:  what is the distribution on the isomorphism class of $\Cl_K \tensor_\Z \Z_{(S)}$ when $K$ is a random $\Gamma$-extension of $F$?

The Cohen-Lenstra philosophy might be taken to suggest that the answer should be ``a module $A$ appears with probability inversely proportional to the size of its automorphism group.''  Immediately a subtlety arises;  its automorphism group {\em as what?}  To define $\Aut(A)$ requires a choice of category in which $A$ resides as an object.

In this case, one might naturally predict that a module $A$ will appear with probability inversely proportional to the number of automorphisms of $A$ as a $\Z_{(S)}[\Gamma]$-module. This turns out to be too na\"ive.  In some respects, this is obvious; for instance, the $\Gamma$-invariants of $\Cl_K \tensor_\Z \Z_{(S)}$ are just $\Cl_F \tensor_\Z \Z_{(S)}$, which is fixed; so we may restrict our attention to those $A$ which have the right space of $\Gamma$-invariants (in particular, which have trivial $\Gamma$-invariants in the case $F=\Q$).

Even taking this into account, the naively applied heuristics give answers that clearly deviate from experiment, even in the case of real quadratic fields, as Cohen and Lenstra observed.  Numerical evidence suggests that, as $K$ ranges over real quadratic extensions of $\Q$, the probability that $\Cl_K[\ell^\infty]$ is isomorphic to $A$ is proportional, not to $|\Aut(A)|^{-1}$, but to $|A|^{-1} |\Aut(A)|^{-1}$.

This observation can be brought in line with the general philosophy in several ways.  \parencite{cohenlenstra} records the observation, credited to Gross, that when $K$ is a real quadratic field, the ring $\OO_K$ is in some sense analogous to the ring $\OO_L[1/\pi]$, where $L$ is a quadratic imaginary field and $\pi$ is a factor of a prime $p$ split in $L$.  Both rings have two ``missing'' places: the two archimedean places in the case of $\OO_K$, and $\infty$ and $\pi$ in the case of $\OO_L[1/\pi]$.  The class group of $\OO_L[1/\pi]$ is simply $\Cl_L / \langle \pi \rangle$.  This suggests that $\Cl_K$ should be seen, not as a random abelian $\ell$-group, but as the quotient of a random abelian $\ell$-group (like $\Cl_L$) modulo a random element (like $\pi$).  And indeed, one can check that if $B$ is drawn from the Cohen-Lenstra distribution on finite abelian $\ell$-groups, and $b$ is drawn uniformly from $A$, then the probability that $B/\langle b \rangle \cong A$ is proportional to $|A|^{-1} |\Aut(A)|^{-1}$.

Another point of view is offered in \parencite{wangwood} and \parencite{lwzb}.  These papers offer a very satisfactory version of the Cohen-Lenstra-Martinet heuristics for $\Gamma$-extensions of $\Q$, and provide compelling geometric evidence for its truth, as we now explain.

First of all, \parencite{wangwood} show that the Cohen-Lenstra-Martinet heuristics, slightly modified and constrained, can be phrased in the following compact way.   Let $S$ be a finite set of primes not containing any divisors of $2|\Gamma|$. Write $\Cl^S_K$ for $\Cl_K \tensor_\Z \Z_{(S)}$, which is a finite $\Z_{(S)}[\Gamma]$-module (that is, an abelian $S$-primary group endowed with an action of $\Gamma$). Let $H$ be a finite abelian $\Z_{(S)}[\Gamma]$-module and suppose that $H^\Gamma$ is trivial.  Let $\Gamma_\infty$ be a subgroup of $\Gamma$ of order $1$ or $2$ (defined up to conjugacy in $\Gamma$), and denote by $\FF(\Gamma_\infty,X)$ the set of Galois extensions $K/\Q$ with Galois group $\Gamma$ such that $\Gamma_\infty \subset \Gamma$ is a decomposition group at an archimedean place of $K$, and such that the product of the ramified primes in $K/\Q$ is at most $X$. 

\begin{conj}
There is a real number $c$ (depending on $\Gamma, \Gamma_\infty, S$) with the following property: for every finite $\Z_{(S)}[\Gamma]$-module $H$, the probability that an element $K/\Q$ of $\FF(\Gamma_\infty,X)$ has $\Cl^S_K \cong H$ approaches 
$$
c |H^{\Gamma_\infty}|^{-1} |\Aut_\Gamma(H)|^{-1}
$$
as $X \ra \infty.$
\label{co:wwclm}
\end{conj}

(The constant $c$ can be computed explicitly as the reciprocal of the sum of $|H^{\Gamma_\infty}|^{-1} |\Aut_\Gamma(H)|^{-1}$ over isomorphism classes of $\Z_{(S)}[\Gamma]$-modules; in other words, one is also conjecturing that there is no ``escape of mass" as $X$ grows.)

This version of the heuristic avoids many of the known pitfalls of the Cohen-Lenstra-Martinet conjectures as originally formulated, which are illustrative of the subtleties of this topic.  We address a few of these now.

First of all, Wang and Wood require that $S$ be a finite set, so that we are considering only finitely many primes at a time.  The necessity of placing some restrictions on the infinite $S$ case was already observed in \parencite[\S 6]{bartellenstra}, and the correct conjecture to make in this general setting remains somewhat mysterious.  One does not want to throw up one's hands totally and remain silent on all questions involving infinitely many primes; for instance, Cohen and Lenstra's original prediction that about three-quarters of all real quadratic fields with prime discriminant have class number $1$ is of this form (since it places a restriction on the $\ell$-primary part for {\em every} odd $\ell$) and is still generally believed.  But we'll say no more about it in these notes, and restrict to finite $S$ hereafter.

Another change is the order in which the $K$ are counted.  When $K$ is quadratic imaginary, there is a natural ordering that presents itself; we count the fields $K$ in order of the absolute value of their discriminant $d_K$.  For general $F$ and any fixed value of $[F:K]$, there are only finitely many fields $K/F$ whose discriminant has absolute norm at most $X$.  Thus the discriminant provides a natural order in which to count number fields.  Indeed, for many results that use the theory of prehomogeneous vector spaces in the style of Bhargava and his collaborators, one has little choice but to count fields in order of discriminant.  But it has come to be seen that this ordering is problematic for Cohen-Lenstra heuristics.  Roughly speaking, the problem is subfields. If one counts $\Z/4\Z$-extensions $K$ of $\Q$ in order of discriminant, for example, and $L$ is a fixed quadratic field, then a positive proportion of $K$ contain $L$, and so any unexpected behavior of a single $\Cl_L$ can bias the distribution of $\Cl_K$ over all $K$.  \parencite[\S 6]{bartellenstra} shows that indeed this phenomenon produces counterexamples to the original Cohen-Lenstra-Martinet conjecture when fields are counted by discriminant.  Counting by product of ramified primes appears, so far, to have much ``smoother" statistical properties, a fact first observed in \parencite{wood2010}.  It is suggested in \parencite{sawinwoodroots}[Remark 1.3] that any natural ordering on extensions $K/F$ will do as long as no intermediate subfield occurs a positive proportion of the time.

One may still well ask: what happened to the philosophy that every group should occur inversely proportionally to its number of automorphisms?  The statement of Conjecture~\ref{co:wwclm} doesn't quite match that expectation.  This is explained by \cite{wangwood}[Theorem 5.1].  In fact, it turns out that $\Cl_K^S$ is not just a $\Gamma$-module, but rather carries a slightly more refined structure called a {\em class triple}; the class triple is determined up to isomorphism by the isomorphism class of $\Cl_K^S$ as $\Gamma$-module, but has fewer automorphisms.  Indeed, the size of its automorphism group is inversely proportional to the probability in Conjecture~\ref{co:wwclm}.  This kind of thing is not infrequent in this part of mathematics; an entity does not have a well-defined automorphism group until we understand what category we're thinking of the entity as an object of, and that choice may be quite subtle!

\subsection{The method of moments}

A proof of Conjecture~\ref{co:wwclm} still seems far out of reach.  But \parencite{lwzb} provides some compelling evidence for its correctness, as we now explain.

The argument passes through the method of moments. Recall what we mean by moments in this context:

\begin{defi} Let $R$ be a ring and $X$ a random variable valued in finitely generated $R$-modules, and $Y$ a finite $R$-module.  Then the $Y$-moment of $X$ is the expected value $\EE|\Surj(X,Y)|$ of the number of surjective $R$-module homomorphisms from $X$ to $Y$.
\label{de:moments}
\end{defi}

For example, if $X$ is a random variable valued in finitely generated $\Z_\ell$-modules, the $(\Z/\ell\Z)$-moment of $X$ is the expected value of $\ell^{r(X)}-1$, where $r(X)$ is the rank $\dim_{\F_\ell} X / \ell X$. 

For class groups of random number fields, proving theorems about moments is often easier than proving theorems about distributions.  The Davenport-Heilbronn theorem, for example, shows that the $(\Z/3\Z)$-moment of the class group of a random quadratic imaginary field $K$ is $1$.  No other moments of the distribution of $\Cl_K[3^\infty]$ as $K$ ranges over quadratic imaginary fields are known.  Many of the breakthrough results of the Bhargava school are also computations of individual moments of random class groups or random Selmer groups.

The moments of the Cohen-Lenstra-Martinet distribution proposed in Conjecture~\ref{co:wwclm} are computed in \parencite{wangwood}.

\begin{theo}[Thm 6.1,\parencite{wangwood}]
If $X$ is a random $\Z_{(S)}[\Gamma]$-module drawn from the distribution in Conjecture~\ref{co:wwclm}, then for every finite $\Z_{(S)}[\Gamma]$-module $H$,
\begin{equation}
    \EE |\Surj(X,H)| = |H^{\Gamma_\infty}|^{-1}. \label{eq:clmmoments}
\end{equation}
\label{th:clmmoments}
\end{theo}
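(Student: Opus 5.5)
We need to compute $\sum_A c\,|A^{\Gamma_\infty}|^{-1}|\Aut_\Gamma(A)|^{-1}|\Surj(A,H)|$ and show it equals $|H^{\Gamma_\infty}|^{-1}$. The plan is to use the standard orbit-counting trick: group surjections $A\to H$ by their kernel.

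First, reduce to a single prime $\ell$. Since $S$ is finite, a $\Z_{(S)}[\Gamma]$-module splits canonically as a product of its $\ell$-primary parts. Both $|\Aut_\Gamma|$ and $|A^{\Gamma_\infty}|$ are multiplicative over this splitting, and so is $|\Surj|$. Hence the distribution is a product of independent $\ell$-primary distributions, and it suffices to treat $\Z_\ell[\Gamma]$-modules. Since $\ell\nmid|\Gamma|$, the ring $\Z_\ell[\Gamma]$ is a product of matrix algebras over unramified extensions of $\Z_\ell$, that is, a maximal order. Every finite module then decomposes along the simple factors, and we reduce further to modules over $M_n(\OO)$, or, by Morita equivalence, over $\OO$ with $\OO$ a DVR with residue field $\F_q$. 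Here we must track two numbers per isotypic factor: the factor $|A^{\Gamma_\infty}|$ equals $|e A|^{d}$-type quantities, where $d$ is determined by the multiplicity of the trivial $\Gamma_\infty$-representation inside the corresponding irreducible $\Gamma$-representation. So on each factor the weight has the form $|A|^{-u}|\Aut(A)|^{-1}$ for some $u\ge 0$ depending on the factor. We may assume $u$ is an integer.

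The core computation is then a one-variable identity. For finite $\OO$-modules with weight $w(A)=|A|^{-u}|\Aut A|^{-1}$, we want to show
\[
\sum_A w(A)\,|\Surj(A,H)| = |H|^{-u}\sum_A w(A).
\]
Write $|\Surj(A,H)|/|\Aut A|$ as a count of isomorphism classes of pairs $(A,\pi\colon A\twoheadrightarrow H)$, each weighted by $1/|\Aut(A,\pi)|$. Such pairs are extensions $0\to K\to A\to H\to 0$, and $|A|^{-u}=|K|^{-u}|H|^{-u}$. Summing over extensions with fixed $K$ and $H$, the groupoid of extensions has mass $|\mathrm{Ext}^1(H,K)|/|\Hom(H,K)|$. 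The left side therefore becomes
\[
|H|^{-u}\sum_K \frac{|K|^{-u}}{|\Aut K|}\cdot\frac{|\mathrm{Ext}^1(H,K)|}{|\Hom(H,K)|}.
\]
For modules over a DVR, $|\mathrm{Ext}^1(H,K)|=|\Hom(H,K)|$, since both are computed from a free resolution $0\to\OO^r\to\OO^r\to H\to 0$ and have equal size. This yields the identity.

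The main obstacle is the bookkeeping in the first paragraph. We must check that the $\Gamma_\infty$-invariants factor compatibly with the Morita reduction, producing a clean factor $|A|^{-u}$ on each simple component. We must also confirm convergence, namely that $\sum_A w(A)<\infty$ with all sums absolutely convergent, so that the rearrangements are legitimate. For convergence, we compare with \eqref{eq:sumaut}: the weights are bounded by $|\Aut A|^{-1}$ over $\F_q$-type modules, and the extension-groupoid argument handles the rest. The constant $c$ then cancels against the normalizing sum, and we obtain exactly $|H^{\Gamma_\infty}|^{-1}$.
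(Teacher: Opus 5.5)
Your proof is correct. The survey gives no argument of its own for this statement; it only cites Wang--Wood, so there is no proof in the text to compare against. Your route is the natural one.

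Using $\Z_\ell[\Gamma]\cong\prod_i M_{n_i}(\OO_i)$ and Morita equivalence, you rewrite the weight, component by component, as a Cohen--Lenstra $u$-weight $|B|^{-d_i}|\Aut_{\OO_i}B|^{-1}$ on finite $\OO_i$-modules. In effect this recovers the componentwise $u$-probabilities in which Cohen and Martinet originally phrased the heuristic, undoing the Wang--Wood repackaging into the single factor $|H^{\Gamma_\infty}|^{-1}$. The classical count over extensions $0\to K\to A\to H\to 0$ then gives $|H|^{-u}$ on each factor, using $|\mathrm{Ext}^1_{\OO}(H,K)|=|\Hom_{\OO}(H,K)|$, which comes from a square presentation of $H$.

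The step you flag as the main obstacle closes in one line. Since $\ell\nmid 2|\Gamma|$, $e=|\Gamma_\infty|^{-1}\sum_{g\in\Gamma_\infty}g$ is an idempotent of $\Z_\ell[\Gamma]$. Write $e_iA\cong P_i\otimes_{\OO_i}B_i$ with $P_i\cong\OO_i^{n_i}$ the lattice carrying the $\Gamma$-action. Then $(e_iA)^{\Gamma_\infty}=eP_i\otimes_{\OO_i}B_i\cong B_i^{d_i}$, where $d_i=\mathrm{rank}_{\OO_i}eP_i$. Consequently:
\begin{itemize}
\item The relevant quantity is $|B_i|^{d_i}$, not a power of $|e_iA|=|B_i|^{n_i}$.
\item $u_i=d_i$ is automatically an integer, although your identity holds for any real $u\ge 0$.
\item $\prod_i|C_i|^{-d_i}=|H^{\Gamma_\infty}|^{-1}$, as required.
\end{itemize}

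Two small points should be made explicit. First, the factor $1/|\Aut K|$ in your displayed sum comes from forgetting the identification of $K$ with $\ker\pi$. The extension groupoid rigidifies $K$, whereas the groupoid of pairs $(A,\pi)$ does not, and this costs a factor $|\Aut K|$. Second, every term is nonnegative, so Tonelli together with $\sum_B|\Aut_{\OO}B|^{-1}=\prod_{j\ge1}(1-q^{-j})^{-1}<\infty$ is all the convergence argument you need.

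Finally, the setup before Conjecture~\ref{co:wwclm} suggests the distribution is supported on modules with trivial $\Gamma$-invariants. In that reading the trivial-character factor is a point mass at $0$. The formula should then be read for $H$ with $H^\Gamma=0$, and the moment is $0$ otherwise. Your componentwise argument covers this with no change.
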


One naturally wonders about a converse to the above theorem; if \eqref{eq:clmmoments} holds for all $H$, does that mean $X$ obeys the conjectured distribution?

In traditional probability theory, it's a foundational question to understand whether there exists a probability distribution $\nu$ on the real numbers giving rise to a specified sequence of moments $a_k = \int_\R x^k d \nu$, and if so whether $\nu$ is the unique such probability distribution.  By now, much is understood about these questions; for example, under the condition that the $a_k$ don't grow too fast, such a measure exists if and only if a certain positivity criterion is satisfied, and when it exists, it is unique.  In particular, under a growth condition, knowing all the moments determines the distribution.  In the context of random abelian groups arising from arithmetic, one sees ``moments determine distributions" techniques arising as early as \parencite{heathbrown:congruent2}.

Extending these results to moments in the more general categorical sense considered here (and beyond) has been a topic of much recent interest, culminating in the results of \parencite{sawinwoodmoments}, which proves a remarkable ``method of moments" result for random variables valued in objects of suitable categories called {\em diamond categories}.  This class of categories includes not only finite $R$-modules, the case considered here, but finite groups, finite commutative rings, finite Lie algebras, and many more exotic examples besides.  (For instance, the argument of \parencite{sawinwood3manifolds} on the profinite fundamental group of a random 3-manifold requires an argument of this kind for the category of pairs $(G,h)$ where $G$ is a finite group and $h$ is an element of $H_3(G,\Z)$!)
Sawin and Wood's main theorem is quite analogous to the situation for real-valued variables; as long as the specified moments ``grow slowly enough," there is at most one distribution with those moments; such a distribution exists if and only if the moments satisfy a positivity criterion; and, when this criterion is satisfied, the distribution can be explicitly described.

So the situation would be completely satisfactory if we knew all the moments of the distribution of $\Cl^S_K$ as $K$ ranges over $\Gamma$-extensions of $\Q$.  We do not know all of them, and in fact, apart from a few exceptional cases, do not know any of them.  But there is a closely allied question in arithmetic geometry where much more is known. We turn to this now.

\subsection{Moments and limits in the function field case}

\label{s:ffmoments}

All the questions discussed so far can be asked when the base field $F$, rather than being a number field, is the function field of a curve over a finite field $\F_q$.  (In this case, we add to our running list of constraints the additional one that $|\Gamma|$ is prime to $q$, and $S$ does not contain the characteristic.)  When a heuristic is meant to hold for all number fields $F$, one naturally suspects that it should hold for function fields as well.  What's more, the function field version of the heuristic often has geometric content that's invisible or obscure on the number field side, and the geometry may make it easier to actually prove things rather than merely conjecture.  

So it has transpired in the present context.  We take $F$ to be the rational function field $\F_q(t)$. The object of study is now the arithmetic of a random $\Gamma$-extension $K / F$, the product of whose ramified primes has norm at most $X$. Such a $K$ is the function field of a smooth curve $Y/\F_q$ with a $\Gamma$-action and an identification of $Y/\Gamma$ with $\P^1/\F_q$, such that the branch locus $B$ in $\P^1$ of the map $Y \ra \P^1$ satisfies $q^{\deg B} < X$. For simplicity, we may take $X$ to be a power $q^n$ of $q$ and consider only those covers whose branch locus on $\P^1$ has degree {\em exactly} $n$, rather than at most $n$.  Such curves are parametrized by the $\F_q$-rational points of a moduli space called a {\em Hurwitz space}, which we denote $\Hur_{\Gamma, n} / \F_q$. 

The notion of ``$\Cl_K$'' now requires a bit more care.  When $K$ is a number field, this means $\Pic(\OO_K)$, where $\OO_K$ is the integral closure of $\Z$ in $K$.  In the function field case, there is no such canonical affine ring.  However, if we choose a copy of $\F_q[t]$ in $\F_q(t)$ (equivalently: choosing a rational point called $\infty$ in $\P^1$, and identifying $\F_q[t]$ with the coordinate ring of $\P^1 - \infty$) we can define $R_K$ to be the integral closure of $\F_q[t]$ in $K$, and $\Cl_K$ to be $\Pic(R_K)$.  This is the approach taken in \parencite{lwzb}; by analogy with the number field case, they say $K$ is {\em totally real} if $Y$ is split completely over $\infty$.  In this case $\Cl_K$ is the quotient of $\Pic(Y)(\F_q)$ by the subgroup generated by the $|\Gamma|$ points of $Y$ lying over $\infty$. 

For each such $K$, we are asking about the number of surjective homomorphisms from $\Cl^S_K$ to some fixed $\Z_{(S)}[\Gamma]$-module $H$.  Such a homomorphism yields a covering $Y' \ra Y$ with Galois group $H$, which is unramified except possibly over $\infty$.  One is thus naturally led to the moduli space, which for the moment we'll call $\Hur^H_{\Gamma,n}$, whose points parametrize $\Gamma$-covers $Y \ra \P^1$ together with an unramified $H$-cover $Y' \ra Y$ such that $Y' \ra \P^1$ is Galois and the action of $\Gamma$ on $H$ is the desired one.

All this being done, the number of $\Gamma$-extensions $K$ being counted has been expressed as $|\Hur_{\Gamma,n}(\F_q)|$, and the total number of surjections from $\Cl^S_K$ to $H$ afforded by all these $K$ together is $|\Hur^H_{\Gamma,n}(\F_q)|$.  The $H$-moment we aim to compute is then nothing other than the ratio
\begin{equation}
\frac{|\Hur^H_{\Gamma,n}(\F_q)|}{|\Hur_{\Gamma,n}(\F_q)|}
\label{eq:hurratio}
\end{equation}
and the heuristic prediction would be that this ratio converges to a limit as $n \ra \infty$, and furthermore that the limit is equal to some specified predicted value.  

I have explained this part rather quickly because this approach to the Cohen-Lenstra conjecture over rational function fields has been discussed before in this seminar, in \parencite{orwbourbaki}, in connection with earlier work in \parencite{evw} that used these ideas to prove results towards the function field Cohen-Lenstra heuristics in the quadratic case $(|\Gamma| = 2)$.

What can be done nowadays is far more general.  For instance, while $|\Hur_{\Gamma,n}(\F_q)|$ and $|\Hur^H_{\Gamma,n}(\F_q)|$ may be hard to compute, it is much easier to understand their behavior as $q \ra \infty$ with $n$ fixed; by the Weil conjectures, this only depends on the number of irreducible components of these spaces, their dimensions, and the action of Frobenius on them.  In some favorable cases, both spaces are geometrically irreducible, in which case the ratio \eqref{eq:hurratio} tends to $1$ as $q \ra \infty.$  In general, the description of the irreducible components is substantially more subtle; but it is now completely understood, thanks to the results of \parencite{wood:liftinginvariant}.

From here, the strategy proceeds as follows.  Write $p_{q,n}$ for the probability distribution on isomorphism classes of $\Z_{(S)}[\Gamma]$-modules given by $\Cl^S_K$, where $K$ is chosen uniformly from the (finite) set of $\Gamma$-extensions $K/\F_q(t)$ whose branch locus on $\P^1$ has degree $n$.  Write $M_{H,q,n}$ for the expected size of $|\Surj_\Gamma(\Cl^K_S,H)|$ as $K$ ranges over this finite set; in other words, $M_{H,q,n}$ is the $H$-moment of the distribution $p_{q,n}$.  The understanding of the irreducible components of Hurwitz spaces allows Liu, Wood, and Zureick-Brown to compute $\lim_{q \ra \infty} M_{H,q,n}$ with $n$ fixed, since the behavior of \eqref{eq:hurratio} as $q \ra \infty$ is governed by a computation of these components.  They arrive at the following appealing conclusion:

\begin{prop}[\parencite{lwzb}, Corollary 1.5] For every finite $\Z_{(S)}[\Gamma]$-module $H$,
$$
\lim_{n \ra \infty} \lim_{\substack{q\to \infty\\ (|H|,q-1)=1}} M_{H,q,n} = |H|^{-1}.
$$
\label{pr:lwzbmoments}
\end{prop}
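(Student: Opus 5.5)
The plan is to interpret the moment $M_{H,q,n}$ as the ratio \eqref{eq:hurratio} of point counts of Hurwitz spaces, take the limit $q \to \infty$ with $n$ fixed by means of the Lang--Weil estimates, and then take $n \to \infty$ by counting components. Each Hurwitz space parametrizing branched covers of $\P^1$ with $n$ branch points is a finite étale cover of the configuration space $\mathrm{Conf}_n$ of $n$ points. Its geometric components therefore correspond to orbits of the braid group on tuples of group elements: for $\Hur_{\Gamma,n}$, tuples $(g_1,\ldots,g_n)$ generating $\Gamma$ with product $1$, and for $\Hur^H_{\Gamma,n}$, tuples in $G = H \rtimes \Gamma$ generating $G$ and mapping to such $\Gamma$-tuples. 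In the $G$-tuples, the local monodromy elements must lie in conjugacy classes of $G$ mapping bijectively to their images in $\Gamma$, since $Y' \to Y$ is unramified away from $\infty$. There is also a decoration at $\infty$: $\infty$ is split completely in $Y$, and surjections from $\Cl_K$ correspond to covers in which the points above $\infty$ split as well.

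By Lang--Weil, as $q \to \infty$ each ratio of point counts tends to
\[
\frac{\#\{\text{geometric components of } \Hur^H_{\Gamma,n} \text{ of top dimension fixed by } \mathrm{Frob}_q\}}{\#\{\text{such components of } \Hur_{\Gamma,n}\}}.
\]
All spaces are of dimension $n$, so only the component counts matter.

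The component count is the key input. By the result of \parencite{wood:liftinginvariant}, for $n$ large (with every relevant conjugacy class appearing often enough) the braid orbits are classified by their multidiscriminant, that is the multiset of conjugacy classes, together with a lifting invariant valued in a quotient of a reduced Schur multiplier. I would fix the multidiscriminant data and count. For each $\Gamma$-component, the number of $G$-components lying over it equals the number of lifts of the conjugacy-class data, which equals $1$ by the coprimality of $|H|$ and $|\Gamma|$ (Schur--Zassenhaus), times the number of lifting invariants over the given one. One then divides by the overcount coming from the conjugation action of $H$. This action has to be quotiented out because we count surjections to $H$ rather than covers with a marked point, but the splitting-at-$\infty$ rigidification affects this quotient.

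The Frobenius action on components is through the cyclotomic character on the lifting invariant, namely $q$-th powering. The hypothesis $(|H|,q-1)=1$ is exactly what should make the $H$-part of the Schur multiplier contribution rigid, so that Frobenius acts compatibly with the base. The $H$-part of $H_2(G)$ and the relevant invariants would then contribute no extra Frobenius-fixed components beyond a count that cancels.

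The main obstacle is the group-theoretic bookkeeping. We need that, after accounting for Schur multiplier contributions and the marking at $\infty$, the number of Frobenius-fixed $G$-components over each $\Gamma$-component is exactly $|H|^{-1}$ times the appropriate normalization. I would first check the totally real splitting at $\infty$: it contributes a choice of $|H|$ lifts of the $|\Gamma|$ marked points that must be collapsed by $H$-conjugacy, giving the factor $|H|^{-1}$. This parallels Theorem~\ref{th:clmmoments} with $|H^{\Gamma_\infty}|$ replaced by $|H|$, since $\Gamma_\infty$ is trivial in the split case. I would then verify, using the coprimality hypothesis, that the remaining lifting-invariant fibres have Frobenius-fixed count independent of the base component. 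Finally, letting $n \to \infty$ removes the small-$n$ components where the \parencite{wood:liftinginvariant} classification fails, since their proportion tends to $0$.
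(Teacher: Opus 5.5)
Your outline follows the same route the paper sketches: write $M_{H,q,n}$ through the ratio \eqref{eq:hurratio} of point counts on marked Hurwitz spaces, use Lang--Weil to reduce the $q\to\infty$ limit to counting Frobenius-fixed components, describe those components by Wood's lifting invariant with Schur--Zassenhaus matching the conjugacy-class data, and let $n\to\infty$ to discard multidiscriminants outside the stable range; your division by the $|H|$-fold overcount (each $\Gamma$-equivariant surjection $\Cl_K\to H$ corresponds to an orbit of $[H:H^\Gamma]=|H|$ marked points, the $H$-conjugates of one lift) is exactly where the factor $|H|^{-1}$ comes from. The bookkeeping you flag as the main obstacle needs a sharper claim than ``a count independent of the base component'': over each Frobenius-fixed $\Gamma$-component there must be exactly one Frobenius-fixed $G$-component, and this holds because the fibre is a torsor under the kernel of $H_2(G,c)\to H_2(\Gamma,c_\Gamma)$, a group whose order involves only primes dividing $|H|$, on which Frobenius acts by an affine map with linear part multiplication by $q$, so $(|H|,q-1)=1$ forces a unique fixed point.
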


In other words, the limit of the moments exactly conforms to the moments of the Cohen-Lenstra-Martinet distribution computed in Theorem~\ref{th:clmmoments}.

We now come to another probabilistic wrinkle.  We have discussed the question of whether the moments of a probability distribution on groups determine the distribution.  It turns out (by contrast with the classical cases of real-valued random variables) to be quite another thing to say that the limits of the moments of a sequence of distributions determine the weak limit of the distributions. So one doesn't immediately get a statement about a weak limit of the distributions $p_{q,n}$ from Proposition~\ref{pr:lwzbmoments}. The following example (Example 6.14 in \parencite{wangwood}) is illustrative in this respect.  Suppose $X$ is a random variable valued in finite abelian groups.  For each prime $\ell$ write $X_\ell$ for $X \oplus (\Z/\ell \Z)$.  Now for any finite abelian group $A$, the $A$-moment of $X_\ell$ is equal to that of $X$ for all but finitely many $\ell$, since a homomorphism from $X \oplus (\Z/\ell \Z)$ must kill the second factor for $\ell > |A|$.  So the sequence $X_2, X_3, X_5, \ldots$ has the property that the limit of its moment sequence is equal to the moment sequence of $X$.  But this sequence of probability distributions certainly doesn't converge to the distribution of $X$.  Just choose some $A$ such that $\Pr(X \cong A) > 0$, and note that $\Pr(X_\ell \cong A) = 0$ for $\ell$ large enough.

This is precisely where the constraint that $S$ is finite becomes important.  The above example relies on the fact that the random variables $X_\ell$ are valued in finite abelian groups, but not in finite abelian $S$-primary groups for any fixed finite $S$. The results of \parencite{wangwood} (and the more general results of the same nature in \parencite{sawinwoodmoments}) show, that in the setting considered here, where $S$ is finite, the solution to the moment problem is not only unique but, in their notation, {\em robustly unique}.  That means that if a sequence of distributions on $\Z_{(S)}[\Gamma]$-modules has the property that its $A$-moment converges to some $m_A$ for every $A$, the sequence of distributions itself converges weakly to the unique distribution with moments $m_A$.  In particular, we can conclude that, for any $A$, the weak limit $\lim_{n \ra \infty} \lim_{q \ra \infty} p_{q,n}(H)$ is the distribution predicted in Conjecture~\ref{co:wwclm}.

This does not prove the Cohen-Lenstra-Martinet conjecture over function fields, which would predict that, for every $q$, the distributions $p_{q,n}$ weakly converge to the C-L-M distribution as $n \ra \infty$.  But if you believe $\lim_{n \ra \infty} p_{q,n}$ exists and does not depend on $q$, then this result does single out the C-L-M distribution as the only reasonable expectation for that limit.

To get the Cohen-Lenstra-Martinet conjecture over $\F_q(t)$, we would need to know that the limiting moment $\lim_{n \ra \infty} M_{H,q,n}$ exists and is equal to $|H|^{-1}$ for every $H$.  While this remains unknown, a major step in this direction has recently been announced in~\parencite{landesmanlevy}, which proves that $\lim_{n \ra \infty} M_{H,q,n}=|H|^{-1}$ for all $q$ which are sufficiently large relative to $H$.  The Landesman-Levy argument is primarily topological in nature and involves lifting the Hurwitz space to characteristic $0$ and studying its homology.  From $H$'s point of view, this is almost a complete resolution of the problem; all but finitely many $q$ are covered!  From $\F_q(t)$'s point of view, we're much farther from the finish line, since one can handle only finitely many $H$. Pushing this kind of argument further to give infinitely many moments for a fixed $q$ remains a major open problem in the subject.

\section{What about $\ell=2$?}

Throughout the paper so far, we have been focusing on the odd parts of class groups.  It is time to explain why.

There is, first of all, the issue of roots of unity. It turns out that the distribution of $\Cl_K[\ell^\infty]$ is sensitive to the presence of $\ell$-power roots of unity in the base field $F$, even in the case of quadratic extensions $K/F$; this phenomenon was first observed in \textcite{malleroots}, which includes extensive computations of $\Cl_K[3]$ as $K$ ranges over quadratic extensions of $\Q(\zeta_3)$. (Malle notes that a related observation appears even earlier, in \cite{gerthillinois}.) This is why the condition $(|H|,q-1) = 1$ appears in Proposition~\ref{pr:lwzbmoments}; it exactly guarantees that there is no nontrivial root of unity in $\F_q(t)$ whose order divides that of $|H|$.  Indeed, the function field case gives good intuition for why roots of unity matter.  Suppose we model the $\ell$-torsion in a class group as $\Pic(X)(\F_q)[\ell]$, which is the $1$-eigenspace for Frobenius acting on $\Pic(X)[\ell](\overline{\F}_q)$.  The Weil pairing identifies the $1$-eigenspace with the dual of the $q$-eigenspace.  This does not supply $\Pic(X)(\F_q)[\ell]$ with any extra structure, unless $q$ is congruent to $1$ mod $\ell$, in which case $\Pic(X)(\F_q)[\ell]$ is self-dual, and in particular, since the pairing is alternating, must have even dimension.  In random matrix terms, one may compute the cokernel of $\gamma-1$ where $\gamma$ is a random symplectic similitude that multiplies the alternating form by $q$; it turns out that the distribution on cokernels is the Cohen-Lenstra distribution when $q \neq 1$, but something different when $q=1$.  This point of view was used in \parencite{garton} to propose modified versions of the Cohen-Lenstra conjecture in some cases where extra roots of unity were present.

Due to the vagaries of the archimedean place, we do not see strict self-dualities on $\ell$-parts of class groups in the number field case; the number field analogues are usually called {\em reflection principles}, and just as in the function-field scenario above, extra roots of unity tend to place class groups in correspondence with themselves under the appropriate such principle.  As Malle observed, this requires modifications to the Cohen-Lenstra-Martinet heuristics, but the precise changes required were not easy to pin down.  The paper \parencite{sawinwoodroots} establishes a convincing version of the Cohen-Lenstra-Martinet heuristics over an arbitrary number field $F$, backed up by function field analogies as in \cite{lwzb}; the presence of roots of unity in $F$, together with the reflection principles they give rise to, is the principal new difficulty.

There's another serious problem with $\ell=2$, which is that the Cohen-Lenstra conjecture as written is catastrophically false in this case.  Suppose $K = \Q(\sqrt{-d})$ is a quadratic imaginary field.  Then if $d'$ is a divisor of $d$ congruent to $1$ mod $4$, $K(\sqrt{d'})$ is an everywhere unramified quadratic extension of $K$.  This argument, which originates with Gauss and which is generally known as {\em genus theory} when done more carefully, means that $\Cl_K[2]$ grows like the number of divisors of $d$, which does not approach a distribution (for instance, the average of the number of divisors of an integer in $[1,X]$ grows without bound as $X \ra \infty$.)  That this does not put a complete end to the study of Cohen-Lenstra at 2 was actually first observed before the Cohen-Lenstra conjectures were formulated, in \parencite{buell}:
\begin{quote}
    ``the $2$-Sylow subgroup tends to be $k - 2$
elementary $2$-groups and one large cyclic factor collecting the other powers of two in
the class number, so that the $2$-Sylow subgroup of the subgroup of squares is cyclic.
In computing the $2$-Sylow subgroup, then, we actually computed that subgroup of the
subgroup of squares, and shall, by abuse of language, call this the $2$-Sylow subgroup.''
\end{quote}

In other words, Buell suggested studying $2 \Cl_K[2^\infty]$ instead of $\Cl_K[2^\infty]$ itself, and that has proven to be a correct impulse.  That $2\Cl_K[2^\infty]$ obeys the $2$-primary Cohen-Lenstra distribution seems to have been first formally conjectured in \parencite{gerth:extension}; earlier, \parencite{gerth:4rank} computed the distribution of the $\F_2$-vector space $2\Cl_K[4]$ as $K$ ranged over quadratic imaginary fields with a fixed number of ramified primes, and \parencite{fouvrykluners} extended this to an average over all quadratic imaginary fields. 

Despite these successes, most people who thought about Cohen-Lenstra and surrounding problems, your correspondent included, thought of $\ell=2$ as a curious side problem, with particular difficulties that made it even less tractable than Cohen-Lenstra in its original form.

This turned out not to be the case.

\section{Variation of $2^\infty$-Selmer groups in quadratic twist families: the work of Smith}

\label{s:smith}

Around ten years ago, Alexander Smith, then a Ph.D. student, began releasing a series of papers which would launch a new direction in the study of Cohen-Lenstra heuristics.  We will focus here mostly on the results of \parencite{smith1,smith2}.  These papers are long and intricate, and involve many different techniques; in this expos\'{e} we will merely try to convey some of the main ideas.

To begin with, Smith does not frame his work as an approach to the variation of class groups, but to the variation of {\em Selmer groups.}  Selmer groups first appear in the study of the arithmetic of abelian varieties over number fields.  If $F$ is a number field, $A/F$ an abelian variety, and $N$ an integer, then the Kummer map provides an injection $\kappa: A(F) / NA(F) \inj H^1(G_F, A[N])$, where $G_F$ denotes the absolute Galois group of $F$ and the $H^1$ is Galois cohomology.  (The basics of Selmer groups and Galois cohomology can be found in the standard reference \parencite{silverman}.) The Kummer map $\kappa$ is very far from surjective, but we can bring the two sides closer by imposing local conditions on the right-hand side which are satisfied by its image.  In particular, we have local Kummer maps
$$
\kappa_v: A(F_v) / NA(F_v) \inj H^1(G_{F_v}, A[N])
$$
for each place $v$ of $F$, and we may define 
$$
\Sel^N(A/F) = \set{\zeta \in H^1(G_F, A[N]): \zeta|G_{F_v} \in \im(\kappa_v), \forall v}.
$$
When $v$ is a non-archimedean place of $F$ which is prime to $N$ and where $A$ has good reduction, $\im(\kappa_v)$ is the set of cohomology classes restricting to $0$ on inertia.  So the Selmer group is a subgroup of $H^1(G_F, A[N])$ cut out by the condition of being unramified at all primes outside some specified finite set $S$, and by some further local conditions at the primes in $S$.  Nowadays, a wide range of groups of this form, with $A[N]$ replaced by this Galois representation or that one, are known as Selmer groups.  In particular, the (Pontryagin dual of the) ideal class group of a number field modulo $N$, which is just the set of classes of $H^1(G_F, \Z/N\Z)$ which are unramified everywhere, is a Selmer group in this general sense. What's more, if $M$ is a $G_F$-module isomorphic as abelian group to $\Z/N\Z$, then $H^1(G_F,M)$ is related to the mod $N$ class group of an extension of $F$ trivializing $M$.

This is the point of view Smith adopts in \parencite{smith1}.  He studies Selmer groups in $H^1(G_F,V)$ where $V$ is what Smith calls \parencite[Definition 4.1]{smith1} a {\em twistable module.}  The precise definition here need not concern us; suffice it to say that Smith works with coefficient  systems $V$ endowed with extra structure such that, for every character $\chi: G_F \ra \pm 1$, one has local conditions cutting out a Selmer group in $H^1(G_F, V^{\chi})$.  This definition is general enough to provide access to both the usual mod $2^k$ Selmer groups of an abelian variety and its quadratic twists, and $\Cl_K / 2^k \Cl_K$ as $K$ ranges over quadratic extensions of $\Q$.

We now state the consequences of Smith's results in the two special cases already mentioned.  We begin with the Cohen-Lenstra conjectures for class groups.  
Some notation:  
\begin{itemize}
    \item 
by $P^{\text{Mat}}(j|n)$ we mean the probability that a random $n \times n$ matrix over $\F_2$ has kernel of dimension exactly $j$.  By $P^{\text{Mat}}(j|\infty)$ we mean the limit of this quantity as $n \ra \infty$.  (That this limit exists is already a contentful fact about random matrix theory, one known prior to the work discussed here.)
\item If $K$ is a number field and $k$ a positive integer, by $r_{2^k}(K)$ we mean the dimension of the $\F_2$-vector space $2^{k-1}\Cl_K[2^k]$.  This list of ranks determines the isomorphism class of $\Cl_K[2^\infty]$.

\end{itemize}
\begin{theo}[\parencite{smith1},Theorem 1.9]
 Given any nonincreasing sequence
\[
r_4 \geq r_8 \geq \dots \geq r_{2^k} \geq \dots
\]
of nonnegative integers, we have
\begin{align*}
&\lim_{H \to \infty} \frac{\# \{ d \in \mathbb{Z}^{>0} : d < H \text{ and } r_{2^k} (\mathbb{Q}(\sqrt{-d})) = r_{2^k} \text{ for } k \geq 2 \}}{H} \\
&\quad = P^{\text{Mat}}(r_4 \mid \infty) \cdot \prod_{k=3}^{\infty} P^{\text{Mat}}(r_{2^k} \mid r_{2^{k-1}}).
\end{align*}
\label{th:smithcg}
\end{theo}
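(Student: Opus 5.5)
The plan is to follow Smith's strategy: an induction on $k$, in which at each stage the $2^{k+1}$-rank is controlled by a Markov-chain transition from the $2^k$-rank. By genus theory, $\Cl_K[2]$ has rank $\omega(d)-1$ (up to the contribution of $2$), and $2^{k-1}\Cl_K[2^k]$ is the kernel of a pairing. The pairing for $k=2$ is the Rédei matrix, whose entries are Legendre symbols $\left(\frac{p_i}{p_j}\right)$; for $k\ge 3$ it is the Cassels--Tate-type pairing on $2^{k-1}\Cl_K[2^k]$ (equivalently, on the $2^k$-Selmer group in Smith's twistable-module language), whose kernel is $2^{k}\Cl_K[2^{k+1}]$.

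The first step is to reproduce the Fouvry--Klüners result for $r_4$. Expanding the Rédei matrix in Legendre symbols and using large-sieve and bilinear character-sum estimates, together with Siegel--Walfisz for the diagonal terms, one shows that the matrix is equidistributed among $\F_2$-matrices with the appropriate symmetry and row-sum constraints. Since $\omega(d)\to\infty$ for almost all $d$, letting the size of the matrix grow yields $P^{\text{Mat}}(r_4\mid\infty)$.

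The inductive step is the heart of the argument. Restrict to $d$ with many prime factors, discard a density-zero set of bad $d$, and group the remaining $d$ into \emph{grids}: boxes $X_1\times\cdots\times X_r$ of choices of primes in prescribed dyadic ranges, with all Legendre symbols between the chosen boxes fixed. On such a grid the $2^k$-rank structure is constant by the induction hypothesis. The key algebraic input is Smith's \emph{reflection/additive} principle. For a ``cube'' of $2^{k}$ (roughly) moduli $d$ obtained by swapping primes $p_i\leftrightarrow p_i'$ in $k$ coordinates, the sum of the $2^k$-pairings over the vertices of the cube equals an explicit Artin-symbol quantity, namely the splitting of one prime in a governing field of degree $2^{O(k)}$ built from the other primes. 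One then shows, via the Chebotarev density theorem in these governing fields and a Ramsey/additive-combinatorics argument (a Gowers-norm-style statement that a function on a product set with controlled cube sums is equidistributed), that the new pairing matrix is equidistributed among alternating or suitably symmetric matrices of size $r_{2^k}$. This gives the transition $P^{\text{Mat}}(r_{2^{k+1}}\mid r_{2^k})$, and multiplying the transitions gives the product formula.

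The main obstacle is this equidistribution step for $k\ge3$. The governing fields have huge degree, so effective Chebotarev in them is unavailable uniformly, and one must carefully choose which primes are small and which are large. In particular, the grid must contain one very large prime, so that Chebotarev is applied with a fixed, bounded governing field while the remaining primes vary. Finally, one has to ensure that the exceptional sets lost at each level sum to density zero, so that the limit over $H$ commutes with the product over $k$.
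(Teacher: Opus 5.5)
Your architecture is the one the paper sketches for Smith's proof: climb level by level using a pairing whose kernel gives the next layer, organize discriminants into grids, and relate the pairings at the vertices of a cube through symbols and governing fields. Equidistribution of those symbols plus a combinatorial argument then forces equidistribution of the pairing, and exceptional sets are discarded. Using Fouvry--Kl\"uners for $r_4$ is a harmless variant. Smith instead starts from the fixed-point Selmer group, which for class groups is genus theory, and treats the R\'edei step with the same grid machinery as the higher ones.

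The real problem is the \emph{target} of equidistribution in your inductive step. For $\Q(\sqrt{-d})$ the pairing on $2^{k-1}\Cl_K[2^k]$ is neither alternating nor symmetric. It is the Artin pairing against the corresponding layer $2^{k-1}\Cl_K^\vee[2^k]$ of the dual group. In Smith's language it pairs the Selmer group of $V$ with that of its Cartier dual, and $V[2^k]\not\cong V^\vee[2^k]$ because $\mu_4\not\subset\Q$. The paper stresses exactly this: the alternating structure present for abelian varieties is absent for $V=\Z/N\Z$.

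What you must prove is equidistribution among \emph{all} $r_{2^k}\times r_{2^k}$ matrices over $\F_2$; that is precisely what yields $P^{\text{Mat}}(r_{2^{k+1}}\mid r_{2^k})$. Equidistribution among alternating matrices would give $P^{\text{Alt}}$ transitions, which preserve parity. Then an odd $r_4$ could never reach $0$, so a positive proportion of imaginary quadratic fields would have infinite $2$-class group. Symmetric matrices give yet another law, the one the paper attaches to the symmetric R\'edei matrix in the Pellian case. So the step as written proves the wrong transition law. The ``cube relations force equidistribution'' argument has to be set up on the full, unstructured matrix space.

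Two smaller points. First, the level-$k$ data is not constant on a grid ``by the induction hypothesis'', and fixing Legendre symbols freezes only the R\'edei step. As in the paper, you must cut grids into higher grid classes on which $2^{k-1}\Cl_K[2^k]$ is constant and prove equidistribution of the pairing on each class. The induction hypothesis is used only for how those classes are distributed. Second, you need no uniformity over infinitely many levels. A fixed nonincreasing sequence is either eventually $0$, so only finitely many factors differ from $1$ and only finitely many conditions are imposed. Otherwise it is eventually a constant $c>0$, and both sides vanish: class groups are finite, and $P^{\text{Mat}}(c\mid c)=2^{-c^2}<1$.
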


The model to have in mind here is that the $2$-primary class group of a random quadratic imaginary number field behaves like a {\em Markov chain}.  The group $\Cl_{\Q(\sqrt{-d})}[2]$ is governed by genus theory; its rank is roughly the number of prime divisors of $d$, and thus should tend to grow with the size of $d$ on average rather than approaching a distribution.  However, as we have already discussed, the dimension of the kernel of a large random matrix over a finite field converges to a distribution.  What Smith's theorem says (in conformity with Gerth's conjecture, proved by Fouvry and Kl\"{u}ners) is that $2\Cl_{\Q(\sqrt{-d})}[4]$ is drawn from this distribution; having made this draw, $4\Cl_{\Q(\sqrt{-d})}[8]$ is distributed as is the kernel of a random $r_4 \times r_4$ matrix, and so on.  The Markov chain that gives the sequence $r_4, r_8, r_{16}, \ldots$ is nonincreasing and has $0$ as unique absorbing state.

This statement does not on its face resemble the Cohen-Lenstra conjectures as formulated earlier in the paper for odd primes (and extended to the $2$-primary case in \parencite{gerth:extension}).  However, the distribution on the isomorphism class of the finite $2$-primary group $2\Cl_K[2^\infty]$ determined by this Markov process turns out to be the same as that provided by the cokernel of $\gamma-1$ where $\gamma$ is a random matrix in $\GL_N(\Z_2)$ with $N$ very large, which conforms with Gerth's conjecture.  Smith expresses the distribution in this way because it reflects what actually happens in the proof, which relies on an iterative argument computing the distribution of $2^k\Cl_K[2^{k+1}]$ conditional on the isomorphism class of $2^{k-1}\Cl_K[2^k]$.

\begin{rema}
    Not for the last time, I am working in less generality than Smith does.  Many of his techniques and results also apply to $\ell$-primary Selmer groups of cyclic degree-$\ell$ twists; for simplicity of exposition, I am restricting to the case $\ell=2$, where some of the most impressive applications can already be found.  In particular, Smith also shows~\parencite[Theorem 1.12]{smith1} that the distribution of the $\ell^\infty$ class groups of cyclic $\Z/\ell\Z$-extensions of a number field $F$ is as the generalized Cohen-Lenstra heuristics predict -- as long as $F$ does not contain a $2\ell$th root of unity.  So we see once again the presence of roots of unity appearing as technical obstacles in the subject. 
\end{rema}

A similar result holds for quadratic twists of abelian varieties.  Here, $P^{\text{Alt}}(j|n)$ means the probability that a random {\em alternating} $n \times n$ matrix over $\F_2$ has kernel of dimension $j$.  But here some care is needed; since the parity of the kernel of an alternating $n \times n$, matrix is the same as that of $n$, the quantity $P^{\text{Alt}}(j|n)$ does not approach a limit as $n \ra \infty$; we only have a limit if $n$ grows through even integers or through odd integers.  So by $P^{\text{Alt}}(j|\infty)$ we mean the average of these two limits.

When $A$ is an abelian variety, we denote by $r_{2^k}(A)$ the dimension of $2^{k-1} \Sel^{2^k}(A)$ as an $\F_2$-vector space.  If $A$ is an abelian variety over $\Q$ and $d$ is a nonzero integer, then $A^d$ denotes the {\em quadratic twist} of $A$ by $d$, an abelian variety over $\Q$ which becomes isomorphic to $A$ over the quadratic extension $\Q(\sqrt{d})$.  This is very concrete when $E$ is an elliptic curve with equation $y^2 = f(x)$, in which case $E^d$ is simply the elliptic curve with equation $dy^2 = f(x)$.

\begin{theo}[\parencite{smith1},Theorem 1.5, \parencite{smith2}, Theorem 2.14] Suppose $A/\mathbb{Q}$ is an elliptic curve, which satisfies the conditions discussed in Remark \ref{rema:conditionsav} below.  Given any nonincreasing sequence
\[
r_2 \ge r_4 \ge \dots \ge r_{2^k} \ge \dots
\]
of nonnegative integers, we have
\begin{align*}
\lim_{H \to \infty} & \frac{\#\{d \in \mathbb{Z}^{\neq 0} : |d| < H \text{ and } r_{2^k}(A^d) = r_{2^k} \text{ for all } k \ge 1\}}{2H} \\
&= P^{\text{Alt}}(r_2 \,|\, \infty) \cdot \prod_{k=2}^{\infty} P^{\text{Alt}}(r_{2^k} \,|\, r_{2^{k-1}})
\end{align*}
\label{th:smithselmer}
\end{theo}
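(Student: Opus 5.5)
The plan follows Smith's strategy and proceeds by induction on $k$, treating the Selmer ranks $r_2, r_4, \ldots$ as a Markov chain. The first stage is the $2$-Selmer rank distribution $r_2$. Here I would appeal to the Heath-Brown / Swinnerton-Dyer / Kane approach. One first passes to a grid of twists: fix a large set of primes, and group the $d$ (restricted to $d$ with many prime factors, which is a density-one condition) into boxes according to their prime factorizations $d = p_1 \cdots p_n$. The $2$-Selmer group of $A^d$ is then described as the kernel of an $n\times n$ alternating matrix over $\F_2$. Its entries are local symbols (Legendre/Hilbert symbols $(p_i/p_j)$ and local invariants at the bad primes of $A$), and the Cassels--Tate pairing forces the alternating structure. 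Equidistribution of these Legendre symbols over the box gives, via large sieve and character sum estimates, that the matrix is close to uniformly random among alternating matrices. Averaging over the parity of $n$ produces $P^{\text{Alt}}(r_2|\infty)$.

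The inductive step is the heart of the argument. Conditional on $2^{k-1}\Sel^{2^k}$ having rank $r_{2^k}$, I would show that $r_{2^{k+1}}$ is the kernel dimension of the Cassels--Tate pairing on $2^{k-1}\Sel^{2^k}(A^d)$. This pairing is an alternating $r_{2^k}\times r_{2^k}$ form, and the claim to prove is that it is equidistributed among such forms as $d$ varies. The tool is Smith's additive-combinatorics device. Write $d$ through a product of prime sets $X_1 \times \cdots \times X_n$ and consider ``cubes'' $\{p_{i}, p'_{i}\}$ in $k+1$ of the coordinates. The key algebraic input is a reflection-type identity: the sum over the $2^{k+1}$ vertices of such a cube of the Cassels--Tate pairing values equals a single Legendre-type symbol $(p/p')$ involving a newly varied prime. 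Since these symbols are equidistributed (Chebotarev applied to governing fields, plus a Ramsey/density argument to find enough cubes where the governing field relations hold), the pairing cannot be biased on most of the grid. One then needs to show that a box on which many cube-sums are equidistributed has equidistributed pairings; this is where the combinatorial argument on hypercubes enters.

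Finally, I would glue the grid statements back to the count over $|d|<H$. This requires handling the prime factorization statistics of typical $d$ (Erd\H{o}s--Kac, using that most prime factors are large and well spaced), controlling error terms uniformly in $k$, and then letting $k\to\infty$ using the fact that $0$ is absorbing. The hypotheses of Remark~\ref{rema:conditionsav} (full rational $2$-torsion with no cyclic $4$-isogeny, or the analogous condition) are used to make the $2$-Selmer matrix description and the twistable-module formalism valid.

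I expect the main obstacle to be the higher-step equidistribution of the Cassels--Tate pairing, i.e.\ proving the cube-sum identity and making the governing fields independent enough to apply Chebotarev with effective error terms. The fields grow in degree with $k$, so the required ranges of primes are only barely under control. This forces the restriction to bounded $k$ at each scale and a very slowly varying dependence between $k$ and $H$.
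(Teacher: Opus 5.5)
Your inductive architecture matches the paper's sketch. You read $r_{2^{k+1}}$ off the kernel of the alternating Cassels--Tate pairing on $2^{k-1}\Sel^{2^k}(A^d)$, prove that pairing equidistributed on grids from equidistribution of governing-field symbols plus a combinatorial argument on cubes, and then pass from grids to intervals. The genuine gap is the base case.

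The Legendre-symbol matrix picture of Heath-Brown, Swinnerton-Dyer and Kane exists only when $A[2]$ has trivial Galois action, i.e.\ for full rational $2$-torsion. Remark~\ref{rema:conditionsav} also covers curves with $A(\Q)[2]=0$. There $H^1(G_\Q,A[2])$ is a norm kernel in $L^\times/L^{\times 2}$, for $L$ the cubic field generated by the $x$-coordinate of a point of order $2$, and the local conditions at $p\mid d$ depend on how $p$ splits in $L$. So the matrix of Legendre symbols $(p_i/p_j)$ you propose to equidistribute is not available. The paper is explicit that this first step, the distribution of the fixed-point Selmer group $\Sel^2$ on grid classes, is not a formality. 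It makes up the bulk of \parencite{smith2}, and in general $\Sel^\omega$ need not have a limiting distribution at all, which is why Smith restricts to ``favored'' twists.

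Even for full $2$-torsion, where citing Kane is legitimate, your description of the mechanism is off in two places. First, the alternating structure governing $\Sel^2$ does not come from the Cassels--Tate pairing; that pairing governs the \emph{next} layer. Second, the parity of $\dim\Sel^2(A^d)$ is not the parity of the number of prime factors of $d$; it is fixed by the root number of $A^d$ (Monsky). The paper stresses that this parity is a separate input which the matrix and pairing machinery does not supply.

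In the inductive step, three things need to be made explicit.
\begin{enumerate}
\item Conditioning on the value of $r_{2^k}$ is not what makes the cube argument run. To add pairing values over the vertices of a cube you must pair corresponding Selmer elements at every vertex. So you have to work on higher grid classes on which $2^{k-1}\Sel^{2^k}(A^d)$ is constant, which uses that all the $A^d[2]$ coincide.
\item The cube relation is not a reflection principle. It comes from the grid relation of Proposition~\ref{pr:grid} and its higher analogue: in a $k$-dimensional grid, $A^\chi[2^k]$ is a subquotient of the sum of the other $2^k-1$ twists. The symbols, via the governing expansion, only determine \emph{which} relation holds on a subgrid. They are symbols of pairs of primes relative to an auxiliary field that changes at each stage, not Legendre symbols. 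You then need two inputs: bilinear equidistribution of these symbols in dense grids, and the fact that the relations which fail to force equidistribution are rare.
\item Passing to all $k$ needs no uniformity in $k$. Each $r_{2^{k+1}}$ is the kernel dimension of an alternating form on a space of dimension $r_{2^k}$, so parity is preserved. Hence both $0$ and $1$ (not just $0$) are absorbing, and the statement for all $k$ follows from the fixed-$k$ statements.
\end{enumerate}
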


\begin{rema} Again, the conclusion of Theorem~\ref{th:smithselmer} is in conformity with existing conjectures about variation of Selmer groups.  In particular, the theorem agrees with the heuristics formulated in \parencite{bklpr}[Conjecture 1.3], which posit that the $2^\infty$ Selmer group is distributed like the intersection $V \tensor (\Q_p/\Z_p) \cap W \tensor (\Q_p/\Z_p)$ where $V,W$ are random isotropic submodules in a large hyperbolic quadratic space over $\Z_p$.
\end{rema}

\begin{rema}
    The conditions on $A$ in Theorem~\ref{th:smithselmer} are not too serious.  For instance, Theorem~\ref{th:smithselmer} is proved in \parencite{smith1} for any elliptic curve over $\Q$ with no rational $2$-torsion or with all $2$-torsion rational and no rational $4$-isogeny.

    However, they are not mere artifacts of the proof: the variation of $\Sel^2$, for instance, really can be different in quadratic twists of elliptic curves with a single rational nontrivial $2$-torsion point~see e.g. \parencite{klagsbrunlemkeoliver}.  These issues are approached much more aggressively in \parencite{smith2}, where one sees that results like Theorem~\ref{th:smithselmer} can still be obtained if one restricts to a suitably chosen family of twists.  
    \label{rema:conditionsav}
\end{rema}

Thanks to the parity constraint on the kernel of an alternating matrix over $\F_2$, the Markov chain governing the distributions in Theorem~\ref{th:smithselmer} consists of two completely separate chains, one with absorbing state $0$ and the other with absorbing state $1$. The statement of Theorem~\ref{th:smithselmer} implies that the quadratic twists of $A$ are equidistributed between the two components; and indeed, it is known that the quadratic twists of an elliptic curve over $\Q$ have $\dim \Sel^2 E$ even half the time and odd half the time.  In the more general statement (\parencite{smith2}[Theorem 2.14]) of which Theorem~\ref{th:smithselmer} is a special case, one also needs to consider cases in which the quadratic twists of $A$ have constant parity; in such cases, the definition of $P^{\text{Alt}}(j|\infty)$ is modified to be supported on only a single parity of $j$.  This phenomenon -- that determining the parity of the dimension of a space carrying an alternating form is its own problem requiring separate methods from the main body of the work -- is rather general. The same kind of issue arises in the work already desribed on determining distributions from moments.  Consider, for instance, a random variable $X$ valued in finite-dimensional $\ell$-vector spaces obeying the Poonen-Rains distribution, which conjecturally governs the mod $\ell$ Selmer groups of random elliptic curves.  Write $X_\text{even}$ for $X$ conditioned on having even dimension, and $X_\text{odd}$ for $X$ conditioned on having odd dimension.  Then $X, X_\text{even}$, and $X_\text{odd}$ have all the same moments!  Indeed, this phenomenon  is already observed in the work of Heath-Brown, in the discussion after Theorem 2 of \parencite{heathbrown:congruent2}.  So any strategy which relies on computing moments and inferring the distribution will get stuck here, unless one has (as one fortunately often does) some separate means of computing the distribution of the parities.

\subsection{Applications}

Before we begin discussing the proofs of the main theorems, we mention some important applications.  First of all, Smith's theorem implies the long-studied {\em minimalist conjecture} that in the family of quadratic twists of a fixed elliptic curve over $\Q$, the proportion of twists $E^d$ with Mordell-Weil rank at least $2$ approaches $0$.  The argument is simple; because the Markov chain in Theorem~\ref{th:smithselmer} converges to one of the two absorbing states $0$ or $1$ with probability $1$, the probability that $r_{2^\infty}(E^d) \geq 2$ is $0$.  (To be precise, the results of \parencite{smith1,smith2} imply this for elliptic curves satisfying the technical conditions alluded to in Remark~\ref{rema:conditionsav}; but in a recent preprint~\parencite{smithbsd}, Smith has extended his proof of the minimalist conjecture to all elliptic curves over $\Q$.) But the $2^\infty$-Selmer rank is an upper bound for the Mordell-Weil rank, so we are done.  Indeed, it is known by ~\parencite[Thm 1.5]{monsky} that $0$ and $1$ occur equally often, so $r_{2^\infty}(E^d)$ takes the values $0$ and $1$ with probability $1/2$ each.  The results of Monsky also imply that the parity of the analytic rank of $E^d$ (the order of vanishing of the $L$-function $L(E_d,s)$ at the center of the critical strip) is the same as that of $r_{2^\infty}(E^d)$.  Thus, under the Birch-Swinnerton-Dyer conjecture that the analytic rank of $E$ is equal to the Mordell-Weil rank, we have that the analytic rank of $E^d$ is a nonnegative integer bounded by $r_{2^\infty}(E^d)$ and of the same parity as $r_{2^\infty}(E^d)$; it must thus be equal to $r_{2^\infty}(E^d)$ whenever that rank is $0$ or $1$, as it almost always is. One thus also obtains {\em Goldfeld's conjecture} that, in the limit, half the quadratic twists of $E$ have analytic rank $0$ and half have analytic rank $1$.

The critical advance here is not so much something special about the prime $2$, but rather that Smith's theorem can be applied to Selmer groups of arbitrarily large order.  For instance, Bhargava and Shankar proved in \parencite{bhargavashankar} that the average size of the $2$-Selmer rank of an elliptic curve (counted by height) is $3$.  A random variable with mean $3$ cannot be $2^2 = 4$ more than $75\%$ of the time, so one immediately gets an upper bound for the proportion of curves whose $2$-Selmer rank is at least $2$.  (One can do better still by considering parity.)  But no method of this kind can ever show that the proportion of curves with high rank is actually zero, unless our heuristics are badly wrong; for we believe that, for each $N$, there actually {\em is} a positive proportion of elliptic curves such that $\Sel^N(E)$ has high rank.  This proportion, however, is expected to get smaller as $N$ gets larger; so only once one gets control over $\Sel^N(E)$ for arbitrarily large $N$, as Smith does, does the minimalist conjecture become obtainable.

The more general statement of Smith's theorem~\parencite[Theorem 2.14]{smith2} applies to many classes of abelian varieties, including Jacobians of hyperelliptic curves.  A theorem of Coleman, using the method of Chabauty, gives an upper bound for the number of rational points on a curve whose Jacobian has rank smaller than its genus.  This fact dovetails nicely with theorems that prove that the rank of this Jacobian is rarely large as the curve moves in a family:  see \parencite{bhargavagross}, \parencite{poonenstoll} for notable prior uses of this strategy.  Smith, in Theorem 3.8 of \parencite{smith2}, proves the following theorem in this direction.  Let $X: y^2 = f(x)$ be a hyperelliptic curve over $\Q$ where $f(x)$ is a polynomial of degree $2g+1$ whose splitting field has Galois group $S_{2g+1}$.  Let $X_d$ be the quadratic twist with equation $dy^2 = f(x)$.  Then $X_d$ has at most $3$ rational points for $100\%$ of $d$.  In fact, the average value of $\max(0,|X_d(\Q)|-3)$ is zero.  The first statement essentially follows from Smith's main theorem followed by application of Chabauty (in a refined form from~\parencite{stoll:independence}); the second by the fact that the number of points on $X_d$ is bounded by an exponential function of the Mordell-Weil rank, which is bounded above by the same exponential of the $2^\infty$-Selmer rank, and the average of this function can be controlled by Smith's main theorem. 

Yet another application of the methods appears in \parencite{koymanssmith2cubes}, which proves that at least $31.95\%$ of integers are not the sum of two rational cubes.  (The expectation is that this proportion exists and is equal to $1/2$.)  This question has long been known to boil down to the distribution of Mordell-Weil rank of elliptic curves in a family of {\em cubic} twists $E_d: y^2 = x^3 - d$.  This is a good opportunity to recall that Smith's results are not restricted to the prime $2$; they allow you to study the $\ell$-primary Selmer groups of objects with an automorphism of order $\ell$ in a family of cyclic twists of order $\ell$.  In this case, they are applied to study the mod $3$ Selmer group of $E_d$ as $d$ varies.  Note that these curves, whence all their Tate modules, carry an automorphism of order $3$; though the fact that this automorphism is not defined over $\Q$ creates substantial difficulties, which is one reason this problem required work beyond the results of \parencite{smith2}.

These are some applications of the main theorems of Smith's papers; in section~\ref{s:koymanspagano} we will discuss more applications of the {\em techniques} of these papers in even broader contexts.

\subsection{Grids}

What makes it possible to make so much progress on $2$-primary Cohen-Lenstra when $p$-primary Cohen-Lenstra (over number fields, at least) has proven so difficult?  In a nutshell, it is because the $2$-primary class groups of different quadratic twists are related to each other, as we now explain.  The simplest manifestation of this phenomenon is already crucial in earlier work on mod $2$ Selmer in quadratic twist families of elliptic curves, which begins with the work of Heath-Brown on congruent number curves~\parencite{heathbrown:congruent1,heathbrown:congruent2} and has continued into the present \parencite{kmrmarkov,kane, park}.   Suppose $E/\Q$ is an elliptic curve and $E^d/\Q$ a quadratic twist.  Then $E[2]$ and $E^d[2]$ are isomorphic as Galois modules; so $\Sel^2(E)$ and $\Sel^2(E^d)$ are subgroups of the same Galois cohomology group $H^1(G_\Q, E[2])$, differing only insofar as the local conditions do.  This makes it substantially easier (albeit by no means straightforward) to understand how $\Sel^2$ varies in a family of quadratic twists; what you are really studying is how a family of local conditions varies as the quadratic twist changes.  The situation of $\Sel^p$ for odd primes $p$ is different; the groups $H^1(G_\Q, E[p])$ and $H^1(G_\Q, E^d[p])$ have different coefficients and there's no reason the two should be in any way related.

What about $\Sel^{2^k}$?  It is certainly not the case that $E[2^k]$ and $E^d[2^k]$ are isomorphic as Galois modules, since $-1$ no longer acts as the identity on these modules.  But the modules corresponding to different quadratic twists are not completely decoupled from one another, either.  Instead, there is a more subtle relation among the twists.  Let $d_1, d_2$ be two integers representing different classes in $(\Q^*)/ (\Q^*)^2$.  Let $N$ be an abelian group isomorphic to $(\Q_2/\Z_2)^r$ with a $G_\Q$-action $\rho_N: G_\Q \ra \Aut(N)$.  (We leave this Galois representation completely unrestricted to emphasize the generality of what we're about to say.)  If $d$ is a nonzero integer and $\chi_d: G_\Q \ra \pm 1$ the corresponding character of $G_\Q$, we denote by $N^d$ the Galois module isomorphic as $\Z_2$-module to $N$, and whose Galois action is the twisted one sending $\sigma$ to $\rho_N(\sigma) \chi_d(\sigma)$.

We may then consider the four Galois modules $N[4], N^{d_1}[4], N^{d_2}[4]$, and $N^{d_1 d_2}[4]$. These are pairwise non-isomorphic, but nonetheless they are related:

\begin{prop} $N^{d_1 d_2}[4]$ is a subquotient of $N[4] \oplus N^{d_1}[4] \oplus N^{d_2}[4]$ as $\Z/4\Z[G_\Q]$-modules.
\label{pr:grid}
\end{prop}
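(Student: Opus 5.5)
The plan is to exhibit the subquotient explicitly. I would cut out a Galois-stable submodule $T \subset N[4] \oplus N^{d_1}[4] \oplus N^{d_2}[4]$ and write down a $G_\Q$-equivariant surjection $T \ra N^{d_1 d_2}[4]$. The whole argument rests on one elementary identity for characters valued in $\pm 1$. Write $\chi_i = \chi_{d_i}$. Then
$$
\chi_1\chi_2 - \chi_1 - \chi_2 + 1 = (\chi_1 - 1)(\chi_2 - 1) \in \set{0, 4},
$$
so $\chi_1\chi_2 \equiv \chi_1 + \chi_2 - 1 \pmod 4$ as scalars acting on any $\Z/4\Z$-module. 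Informally, this says the twist by $d_1 d_2$ is the ``sum of the twists minus the untwisted module'', and the construction makes that precise. It also uses the fact that each $\chi_i - 1$ lies in $\set{0,-2}$.

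First I define the submodule
$$
T = \set{(a,b,c) : a - b \in N[2],\ a - c \in N[2]}.
$$
Since $N \cong (\Q_2/\Z_2)^r$, we have $N[2] = 2N[4]$, so $T$ consists of triples that agree mod $2$. To check that $T$ is Galois-stable, note that $\sigma$ acts by $(a,b,c) \mapsto (\rho a, \chi_1\rho b, \chi_2 \rho c)$, writing $\rho = \rho_N(\sigma)$ and $\chi_i = \chi_i(\sigma)$. Then
$$
\rho a - \chi_1 \rho b = \rho(a-b) + (1-\chi_1)\rho b.
$$
The first term lies in $N[2]$ because $\rho$ preserves $N[2]$. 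The second term lies in $2N[4] = N[2]$ because $1-\chi_1 \in \set{0,2}$. The same argument handles $a-c$.

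Next I define $f: T \ra N^{d_1 d_2}[4]$ by $f(a,b,c) = b + c - a$. The key step is equivariance: I need $f(\sigma \cdot (a,b,c)) = \chi_1\chi_2 \rho(b+c-a)$. Using the congruence above, I replace $\chi_1\chi_2$ by $\chi_1+\chi_2-1$ and subtract $-\rho a + \chi_1 \rho b + \chi_2 \rho c$. The difference should collapse to
$$
(\chi_1 - 1)\rho(c - a) + (\chi_2 - 1)\rho(b - a).
$$
Each term is an element of $\set{0,-2}$ times an element of $N[2]$, hence zero. This is precisely where the mod-$2$ congruence condition defining $T$ is needed. It is the only real content of the argument and the step I would verify most carefully.

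Finally, $f$ is surjective because $(x,x,x) \in T$ maps to $x$. Therefore $N^{d_1 d_2}[4] \cong T/\ker f$ as $\Z/4\Z[G_\Q]$-modules, which exhibits it as a subquotient of $N[4] \oplus N^{d_1}[4] \oplus N^{d_2}[4]$, as Proposition~\ref{pr:grid} claims. The construction uses nothing about $\rho_N$, consistent with the remark before the statement that the Galois representation is left completely unrestricted.
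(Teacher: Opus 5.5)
Your proof is correct, and it produces exactly the paper's subquotient, presented from the other side. The paper embeds $N^{d_1d_2}[4]$ diagonally, $n\mapsto(n,n,n)$, into the quotient of $N[4]\oplus N^{d_1}[4]\oplus N^{d_2}[4]$ by $N[4]\otimes K_0=\set{(u,v,w)\in N[2]^3 : u+v+w=0}$. Your $T$ is precisely the preimage of that diagonal image, and your $\ker f$ is precisely $N[4]\otimes K_0$, so $T/\ker f$ is the same module written as a quotient of a submodule rather than a submodule of a quotient.

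The difference lies only in the bookkeeping. Your identity $\chi_1\chi_2\equiv\chi_1+\chi_2-1 \pmod 4$ packages the paper's case check that $\phi(\sigma n)-\sigma\phi(n)$ is zero or has exactly two nonzero coordinates. Your trivial surjectivity via $(x,x,x)$ replaces the paper's injectivity argument, which invokes freeness of $N[4]$ over $\Z/4\Z$. The price is that you must check $T$ is Galois-stable, which you do.
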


\begin{proof}
    Let $M$ be the quotient of $N[4] \oplus N^{d_1}[4] \oplus N^{d_2}[4]$ by the submodule spanned by
    $$
    2N[4] \oplus \set{0} \oplus 2N^{d_2}[4],\  
    2N[4] \oplus 2N^{d_1}[4] \oplus \set{0},\  
    \set{0} \oplus 2N^{d_1}[4] \oplus 2N^{d_2}[4].
    $$

    Write $\beta: N^{d_1 d_2}[4] \ra N[4]$ for the (non-Galois-equivariant) map which is an isomorphism of the underlying abelian groups and which has the property that, for every $\sigma \in G_\Q$, we have $\beta(\sigma n) = \chi_{d_1 d_2}(\sigma) \sigma \beta(n)$.  (That such an isomorphism exists is more or less the definition of the twist.)  Define $\beta_{d_1}$ and $\beta_{d_2}$ to be the corresponding maps from $N^{d_1 d_2}[4]$ to $N^{d_1}[4]$ and $N^{d_2}[4].$
    
    Now consider the map $\phi: N^{d_1 d_2}[4] \ra M$ obtained by sending an element $n$ to $\beta(n) \oplus \beta_{d_1}(n) \oplus \beta_{d_2}(n)$.

    We begin by showing that $\phi$ is Galois-equivariant.  For each $n \in N^{d_1 d_2}[4]$ and $\sigma \in G_{\Q}$, we have
    $$
    \phi(\sigma n) = \chi_{d_1 d_2}(\sigma) \beta(n) \oplus \chi_{d_2}(\sigma) \beta_{d_1}(n) \oplus \chi_{d_1}(\sigma) \beta_{d_2}(n)
    $$
    so $\phi(\sigma n) - \sigma \phi(n)$ is
    $$
    (\chi_{d_1 d_2}(\sigma)-1) \sigma \beta(n) \oplus (\chi_{d_2}(\sigma)-1) \sigma \beta_{d_1}(n) \oplus (\chi_{d_1}(\sigma)-1) \sigma \beta_{d_2}(n)
    $$
    We note that this element lies in $2N[4] \oplus 2N^{d_1}[4] \oplus 2N^{d_2}[4]$, and moreover is either $0$ (if $n \in 2N^{d_1 d_2}[4]$ or $\sigma$ is in the kernel of $\chi_{d_1} \oplus \chi_{d_2}$) or has exactly two out of three coordinates nonzero (otherwise.)  In either case, $\phi(\sigma n) - \sigma \phi(n)$ is $0$ in the quotient $M$, as claimed.

    We now show that $\phi$ is injective. Write $K_0$ for the subgroup of $(\Z/4\Z) \oplus (\Z/4\Z) \oplus (\Z/4\Z)$ spanned by $(2,2,0),(2,0,2),$ and $(0,2,2)$ and write $M_0$ for $(\Z/4\Z) \oplus (\Z/4\Z) \oplus (\Z/4\Z) / K_0$.  Then, as abelian groups, $M$ is $N[4]^{\oplus 3} / (N[4] \tensor_{\Z/4\Z} K_0)$ = $N[4] \tensor_{\Z/ 4\Z} M_0.$  The map $(\Z/4\Z) \ra M_0$ which sends $1$ to the image of $(1,1,1) \in M_0$ is visibly injective;  since $N[4]$ is free over $\Z/4\Z$, the same is true for $N[4] \ra N[4] \tensor M_0$.

    We have thus exhibited the Galois module $N^{d_1 d_2}[4]$ as a submodule of a quotient of $N[4] \oplus N^{d_1}[4] \oplus N^{d_2}[4]$, as claimed.
    
\end{proof}

(Proposition~\ref{pr:grid} appears as Example 7.16 in \parencite{smith1}, as a corollary of the more general Proposition 7.15.)

The utility of Proposition~\ref{pr:grid} is that, when $V$ is a twistable module, the $4$-Selmer group of $V^{d_1 d_2}$ is a subgroup of $H^1(G_F, V^{d_1 d_2}[4])$, which in turn is now revealed as a group we can describe in terms of the three other Galois cohomology groups $H^1(G_F, V[4]), H^1(G_F, V^{d_1}[4]),$ and $H^1(G_F, V^{d_2}[4])$.  This ties together the four $4$-Selmer groups corresponding to the four quadratic twists.

For higher powers of $2$, there is a similar but more complicated story. Note that the argument above does not exhibit $N^{d_1 d_2}[8]$ as a subquotient of $N[8] \oplus N^{d_1}[8] \oplus N^{d_2}[8]$; the problem is that $(4,0,0) = (2,2,0) + (2,-2,0)$, so the analogue of the module $M$ in $N[8] \oplus N^{d_1}[8] \oplus N^{d_2}[8]$ is killed by $4$, so $N^{d_1 d_2}[8]$ has no chance to inject into it. Rather, in order to study $2^k$-torsion, one has to consider a $k$-dimensional {\em grid}; that is, a set of $2^k$ quadratic characters spanned by a set of $k$ linearly independent elements in $(\Q^*)/(\Q^*)^2$.  Then one finds that, if $\chi$ is a character in the grid, $N^\chi[2^k]$ appears as a subquotient of the sum of the other $2^k-1$ twists of $N[2^k]$.  

This principle (or more accurately its generalization in \parencite{smith1}[Prop 7.15]) is central to Smith's method, since it supplies a method of finding relations among mod $2^k$ Selmer groups in a family of twists.   More specifically, it is well adapted for finding relations among twists which lie in a grid.

\begin{defi}
    Let $r$ be a positive integer and, for each $i$ in $\set{1,\ldots,r}$ let $X_i$ be a set of primes, all these sets being pairwise disjoint.  A {\em grid of twists} is a set of squarefree integers of the form $p_1 \cdot \ldots \cdot p_r$, where each $p_i$ is drawn from $X_i$.  We say $r$ is the {\em dimension} of the grid and the $|X_i|$ are the {\em sidelengths}.
\end{defi}

These relations between twists allow Smith to control the behavior of Selmer groups in a large grid of twists using knowledge about a smaller family of twists. 

\begin{rema}
As an analogy, imagine that you had a function $f$ on a group $G$ which had the property that $f(xy)$ was determined by the values of $f(x)$ and $f(y)$.  Then you would only need to know $f$ on a generating set of $G$ in order to determine $f$.  In the present context, what we have is a much fainter signal; for instance, one might need to know $2^k-1$ values in order to get information about one more!  But, at this level of abstraction, there is a long tradition of leveraging this kind of weak signal to get surprisingly strong control over a function.  Think of Dvir's theorem~\parencite{dvir} on the finite field Kakeya conjecture, which controls subsets of $\F_q^n$ (i.e. $0-1$ functions $f$ on $\F_q^n$) which contain no line.  That is to say: the mere fact that $f(x) = 1$ for $q-1$ points on a line tells you that $f(x) = 0$ for the remaining point.  The context of this kind which is closest to Smith's work, at least superficially, is that of inverse theorems for Gowers norms.  When $A$ is a finite abelian group, the $k$'th Gowers norm of a function $f: A \ra \C^\times$ is large when the product of $f$ over the vertices of many $k$-dimensional parallelepipeds in $A$ is close to $1$.  This can be thought of as a signal about one value of $f$ given $2^k-1$ others, and indeed~\parencite{taoziegler} one can show that the condition of having large Gowers norm imposes strong constraints on $f$.  (I cannot resist noting that these theorems are trivial for $k=1$, nontrivial but well-understood for $k=2$, and much more difficult and technical for $k \geq 3$, exactly in conformity with what happens when we study the variation of $2 \Cl_{\Q(\sqrt{-d})}[2^k]$.)
\end{rema}

\subsection{Equidistribution of pairings and symbols}

\label{s:equidistsymbols}

The main theorem of \parencite{smith1}, Theorem 4.18, says the distribution of the $2^k$-Selmer groups of the twists in $X$ does indeed approach the expected distribution as $X$ ranges over larger and larger grids satisfying some technical conditions.   (Here, ``larger and larger" combined with the technical conditions implies in particular that the primes in $X$, the dimension of $X$, and the sidelengths of $X$ are {\em all} growing).  We can only sketch the proof of Theorem 4.18 here. 

One key actor is the Cassels-Tate pairing on $2^{k-1} \Sel^{2^k}(V)$, which in the form needed for this paper is defined in \parencite{morgansmith}.  This pairing is automatically alternating if $V$ comes from an abelian variety, but does not carry this extra structure when $V$ is $\Z/N\Z$ (the case related to class groups.)  The kernels of this pairing compute $2^k \Sel^{2^{k+1}}(V)$~\parencite[Def 4.11]{smith1}. This is the core of the iterative strategy for proving Theorems~\ref{th:smithcg} and \ref{th:smithselmer}.  If one already knows that $\Sel^{2^k} V^d$ is distributed as one expects in a family of quadratic twists (say, in some large subset of a gigantic grid) then we can break this family up into subsets where $\Sel^{2^k} V^d$ is in an appropriate sense constant (at the very least, has constant dimension $N$) and try to show that the Cassels-Tate pairing is equidistributed among alternating $N \times N$ matrices over $\F_2$ in this family (or all $N \times N$ matrices, if the alternating structure is absent.)  That will imply that we have the desired distribution of $2^k \Sel^{2^{k+1}}(V)$ conditional on $2^{k-1} \Sel^{2^k}(V)$, which is exactly what we need.

This pairing, in turn, is determined by algebraic data related to the {\em symbols} of pairs of primes in $X$. The symbol of a pair of primes in $\bar{\Q}$, defined in \parencite[\S 3]{smith1} is one of the key algebraic aspects of the paper.  We emphasize that it does not depend merely on the two primes, but on an auxiliary extension $K/F$, which will depend on $V$, and which will need to be modified throughout the argument as we proceed through the iteration described above, 

When $K=F=\Q$, the symbol is simply the usual Legendre symbol of two primes.  So you can think of this passage from symbols to Cassels-Tate pairing to ``knowledge of the $2^{k+1}$-Selmer group from the $2^k$-Selmer group" as a vast generalization of the work of R\'{e}dei, which we briefly recall.  If $K$ is a quadratic field $K = \Q(\sqrt{N})$, where $N$ is a product of distinct primes $p_1 p_2 \ldots p_n$, the mod $2$ narrow class group of $K$ has dimension $n-1$, as was known to Gauss.  What about $\Cl_K[4]$?  Or, in the spirit of iterative construction, what about $2\Cl_K[4]$, which is the information required for determination of $\Cl_K[4]$ given $\Cl_K[2]$?  What R\'{e}dei shows is that the elementary abelian $2$-group $2\Cl_K[4]$ can be expressed in terms of the kernel of the $n \times n$ matrix $M_d$ over $\F_2$ whose $ij$ entry with $i \neq j$ is $1$ if and only the Legendre symbol $(\frac{p_i}{p_j}) = -1$, and whose diagonal entries are determined by the constraints that the row sums are 0.  In particular, this shows that if $d$ is drawn uniformly at random from a grid -- that is, if each $p_i$ is drawn uniformly from some large set $X_i$ of primes -- then each of the off-diagonal entries of the matrix $M_d$ is a Legendre symbol of a prime drawn at random from $X_i$ and a prime drawn independently at random from $X_j$.  One might well imagine that this matrix is well-modeled by a random $n \times n$ matrix (subject to the conditions imposed by quadratic reciprocity), at least as far as its rank goes; and indeed, this program is carried out successfully in \parencite{gerth:4rank,fouvrykluners} in order to control the distribution of $2\Cl_K[4]$ in families of quadratic fields.

In the setting considered by Smith, things become substantially more complicated.  Once $k$ is large, it is not exactly the case that the Cassels-Tate pairing on $2^{k-1} \Sel^{2^k}(V^d)$ is determined by the symbols between the primes dividing the twist parameter $d$.  We recall from the discussion above that, in any grid $X_0$ of dimension $k$ and sidelength $2$, there is some relation among the $2^k$ Galois modules $V^d$ where $d$ varies over the twists in $X_0$; and from this one can work out that there is some relation between the groups $2^{k-1} \Sel^{2^k}(V^d)$ as well.  What the symbols\footnote{or, more precisely, an arithmetic datum called the ``governing expansion" which is downstream of the symbols.} determine is {\em what relation this is}.  In particular, given the symbols, one obtains some family of relations that is satisfied among the Cassels-Tate pairings on $2^{k-1} \Sel^{2^k}(V^y)$ as $y$ ranges over some subgrid $Y$ of $X$.  (This is my paraphrase of \cite[Lemma 8.10]{smith1}.)  If such relations forced the Cassels-Tate pairing to be equidistributed as $d$ ranged over all of $Y$, we would be happy, but life is not quite that good.  Among the possible relations that might obtain, there are some which would {\em not} imply equidistribution, but these can be shown to be rare exceptions among all possible relations (This is my paraphrase of \cite[Proposition 7.9]{smith1}.)  A simple metaphor:  if you have a function $f$ on $\set{1,\ldots N}$ which satisfied the relation $f(n+1) = \lambda f(n)$ for some $\lambda$ on the unit circle, you would know that the average of $f$ was close to $0$, unless $\lambda$ was very close to $1$.  And if you didn't know that, but you did know you could break the set $\set{1,\ldots, N}$ into a union of almost-disjoint arithmetic progressions that almost covered the interval, and such that on each one of these arithmetic progressions with common difference $a$ you had a relation $f(n+a) = \lambda_i f(n)$, and if by a separate mechanism you knew that the $\lambda_i$ that provided the relations on those arithmetic progressions were well-distributed enough on the unit circle that you could choose the progressions in a way that very few of the $\lambda_i$ were near $1$, then, with some very careful bookkeeping, you would have a chance of showing that the average of $f$ on $\set{1, \ldots N}$ is close to $0$.  This is more than a metaphor (albeit less than a faithful description.)  In a sufficiently dense grid, one can show that the symbols of pairs of primes are reasonably well distributed (this is my paraphrase of \parencite[Theorem 5.2]{smith1}), and this allows us to evade those exceptional relations which would fail to force equidistribution of the Cassels-Tate pairing; Smith then breaks a large grid into subsets on each of which some relation between Selmer groups apply, and shows that the ``problematic relations" are rare enough that they can mostly be avoided, so that the Cassels-Tate pairing behaves itself on each of the subsets.

\begin{rema}
    In the paper of Koymans and Smith on sums of two rational cubes~\parencite{koymanssmith2cubes} mentioned earlier, the kind of equidistribution proved in \cite[Theorem 5.2]{smith1} is not enough; for that result, they need a {\em trilinear} equidistribution statement concerning symbols defined on {\em triples} of primes.  The simplest such symbols are called R\'{e}dei symbols; in some sense they are to Legendre symbols as Massey products are to cup products~\parencite{kim:triple}.
\end{rema}

Putting this together, one finds that the Cassels-Tate pairing is indeed equidistributed on all of $Y$, and showing that $X$ can be mostly covered by a union of subgrids to which this argument applies, we get that the Cassels-Tate pairing is equidistributed on $X$ (this is my paraphrase of \cite[Theorem 6.2]{smith1}).  More precisely, what Smith shows is that the Cassels-Tate pairing on $2^{k-1} \Sel^{2^k}(V^d)$ is equidistributed in a subfamily of $X$ called a ``higher grid class" over which $2^{k-1} \Sel^{2^k}(V^d)$ is constant; without such a restriction, the question doesn't really make sense, since one needs the dimension of $2^{k-1} \Sel^{2^k}(V^d)$ to be fixed in order to have a fixed space of pairings to equidistribute over.  

The main theorem \parencite[Theorem 4.18]{smith1} on the distribution of $\Sel^{2^k}$ follows, essentially by iterating this procedure; we start with a grid class  on which $\Sel^2$ is constant, show that the Cassels-Tate pairings equidistribute there, which provides the distribution of $2\Sel^4$ conditional on $\Sel^2$; this breaks our grid class into higher grid classes where $2\Sel^4$ is constant, and on each of these we apply the argument again to obtain the distribution of $4 \Sel^8$ conditional on $2 \Sel^4$, and so on.  At each stage, one needs to make sure the grids are large enough for the equidistribution statements on symbols to kick in; as one might imagine, this iterative process leads to convergence rates in the final result which are slow even by analytic number theory standards.  When the grids involve primes of size around $H$, the deviation of the distribution of $\Sel^{2^k}$ from its limiting value is bounded above by $\exp(-c (\log \log \log H)^{1/2})$.  It would be interesting to investigate, by experimental or theoretical means, what the actual rate of convergence might be.

\subsection{The fixed-point Selmer groups}

The careful reader will note that the above sketch is an induction without a base case.  Once we have restricted to a grid class on which $\Sel^2$ is constant, we can start to work on $\Sel^4$ -- but how do we control the variation of the first step, $\Sel^2$?  Or, more generally, when $V$ is a Galois module endowed with an automorphism of order $\ell$, how do we control the variation of $\Sel^\omega$, where $\omega = \zeta-1$?  Smith calls these the {\em fixed point Selmer groups}, since $V[\omega]$ is precisely the set of points fixed by $\zeta$.  Their study makes up the bulk of \parencite{smith2}. 
We can only gesture at the difficulties here.  For one thing, $\Sel^\omega$ very often does {\em not} approach a distribution as we vary over a family of cyclic $\ell$-twists, a phenomenon which we have already noted in the case of the $2$-Selmer groups of quadratic twists of elliptic curves.  Smith shows that for a well-chosen population of twists he calls {\em favored}, $\Sel^\omega$ does approach the expected distribution.  (This is my paraphrase of \parencite[Theorem 2.14]{smith2}.)

Even without restricting to the favored twists, one can show that the number of twists where $\dim \Sel^\omega$ is really problematically large is not too high.  (This is my paraphrase of \parencite[Theorem 2.6]{smith2}.)  This fact alone, combined with the fact that $\Sel^\omega(A)$ is an upper bound for the Mordell-Weil rank of an abelian variety $A$, is enough to show that the average rank of $A$ over a $\Z/\ell\Z$-extension of $\Q$ is finite. (This is my paraphrase of \parencite[Theorem 1.1]{smith2}.) Just as in \parencite{bhargavashankar}, one can get average rank results from control over a single mod $N$ Selmer group.  But the results of \parencite{smith2} do not yet allow one to control $\Sel^{\omega^k}$ for higher $k$ when $A$ is an abelian variety. So, as yet, we do not know in general that there is any fixed $N$ such that $\rank(A(K)) - \rank(A(F))$ is at most $N$ for $100\%$ of $\Z/\ell \Z$-extensions of $K/F$; unless $\ell = 2$, in which case this is Theorem~\ref{th:smithselmer} described above.  Why the difference between $2$ and odd $\ell$?  It comes down to the fact that the pairing on the Selmer groups of abelian varieties, which is alternating mod $2$, is instead {\em symmetric} modulo $\ell$.  Of course, when $1 = -1$ alternating and symmetric are not very different, except in one critical way: the diagonal entries of an alternating matrix over $\F_2$ are $0$, while the diagonal entries of a symmetric matrix over $\F_\ell$ can be whatever they please; and controlling these diagonal entries of the pairing is, for the moment, beyond the reach of the methods of \parencite{smith1,smith2}.

There is still one more step.  We have talked about how to get the appropriate probability distributions on large grids, but for most people it is more natural and desirable to compute averages over {\em intervals} -- one wants to know, for instance, a distribution over quadratic twists by $d$ where $d$ ranges over squarefree integers in the range $[0,H]$, not as $d$ ranges over the $1000$ products of one of the first ten primes, one of the next ten, and one of the third ten.  Section 8 of \parencite{smith2} explains how an interval can be approximated well enough by a union of grids of the kind treatable by Smith's methods that the probability distributions can be carried over from the grid context to the integral context.  Your correspondent, who is by no means an analytic number theorist, will say no more about this part.

One note:  for the study of $4$-ranks, it took quite a long time to get from average behavior with a fixed number of primes dividing the discriminant (Gerth) to the desired average over all discriminants (Fouvry-Kl\"{u}ners), and this created a sense that these problems become easier if you restrict the number of prime factors of the discriminant.  The situation is now quite the opposite!  Smith's arguments rely critically on being able to work with very large and flexibly chosen sets of ramified primes.  Proving that, say, the twists of an elliptic curve by a quadratic field of {\em prime} discriminant are $100 \%$ of rank $0$ or $1$ seems out of reach for Smith's methods in their current form.  As for class groups of quadratic field with almost-prime discriminant, partial results are known up to the $16$-rank~\parencite{koymansmilovic}, but beyond that, as one expert in the field told me, ``all is darkness."

\section{The work of Koymans and Pagano on the negative Pell equation}

\label{s:koymanspagano}

The Pell equation $x^2 - d y^2 = 1$, where $d$ is a fixed squarefree positive integer and $x$ and $y \neq 0$ are integers to be solved for, is one of the oldest and most widely studied Diophantine equations; so old that Archimedes thought about it in some form.  Brahmagupta made major progress on understanding its solutions in the 7th century CE.  One mathematician whose contribution to the Pell equation is more questionable, ironically, is John Pell (1611-1685), though he certainly worked in number theory and the equation was included in a textbook to which he contributed greatly.\footnote{Pell told the biographer John Aubrey that ``when he solves a Question, he straines every nerve about him, and that now in his old age it brings him to a Loosenesse."  Very relatable even for modern number theorists, especially when contemplating what must have been the nerve-straining difficulty of building the papers discussed here today!}

The Pell equation always has solutions; in modern terms, we would say that the real quadratic field $\Q(\sqrt{d})$ always has a unit $x_0 + y_0 \sqrt{d}$ other than $\pm 1$; the norm of this unit is a unit in $\Z$, which is to say $\pm 1$; so its square is a unit $x + y \sqrt{d}$ whose norm is $1$, which is to say that $x^2 - d y^2 = 1$.  Indeed, any even power of the unit will do, which means that once you have one solution you have infinitely many; this is what Brahmagupta worked out a millennium and a half ago.

But let's go back to that unit $x_0 + y_0 \sqrt{d}$, which yields a solution to $x_0^2 - dy_0^2 = \pm 1$.  It is certainly possible for the $-1$ case to be achieved; for instance, when $d=5$ we have $2^2 - 5 \cdot 1^2 = -1.$  But it is not {\em always} achieved.  For one thing, the negative Pell equation
\begin{equation}
    x^2 - d y^2 = -1
    \label{eq:negpell}
\end{equation} might not even have a solution over $\Q$.  It is not hard to show that \eqref{eq:negpell} has a solution over $\Q$ if and only if every odd prime divisor of $d$ is congruent to $1$ modulo $4$.  When this is the case, we say that $d$ satisfies the {\em Pellian condition.}  But the Pellian condition does not guarantee that \eqref{eq:negpell} has a {\em integer} solution, which is far more subtle. For example, \eqref{eq:negpell} has no integer solution with $d=34$, despite the existence of a rational solution $(5/3)^2 - 34 \cdot (1/3)^2 = -1$.

Whatever determines the solvability of \eqref{eq:negpell}, it cannot be something as simple as a condition on the primes dividing $d$, since Dirichlet showed that \eqref{eq:negpell} is solvable with $d=p$ for {\em every} prime $p$ congruent to $1$ mod $4$.  The condition must have something to do with interactions among the prime factors of $d$. And indeed, Dirichlet also knew that $d = pq$ yields a solvable \eqref{eq:negpell} whenever the Legendre symbol $(\frac{p}{q})$ is $-1$. Dirichlet also gave a sufficient condition for insolubility of \eqref{eq:negpell} in terms of biquadratic symbols; together, these results showed that (with $c,c',c'' > 0$) of the $c X / \sqrt{\log X}$ values of $d$ in $[0,X]$ satisfying the Pellian condition, at least $c' X / \log X$ have \eqref{eq:negpell} solvable and at least $c'' X \log \log X / \log X$ have \eqref{eq:negpell} unsolvable.  Nagell conjectured in 1932 that both these quantities in fact accounted for a positive proportion of the discriminants.  And \parencite{stevenhagen} sharpens this to a precise conjecture for the constant.  It is this conjecture of Stevenhagen that Koymans and Pagano have proved.  Non-solvable instances of \eqref{eq:negpell} are actually fairly rare among small discriminants, occurring for only $14\%$ of the discriminants below $10,000$~\parencite{stevenhagen}[Table 1].  In fact, \eqref{eq:negpell} is insoluble almost half the time.

\begin{theo}[Theorem 1.1, \parencite{koymanspagano}]
    Stevenhagen's conjecture is true.  In other words:  the proportion of positive squarefree $d < X$ with all odd prime divisors congruent to $1$ mod $4$ such that $x^2 - d y^2 = -1$ has a solution in integers $x,y$ approaches
    \begin{equation}
    1 - \prod_{j \geq 1, j\  \text{odd}} (1-2^{-j}) = 0.58057\ldots
    \end{equation}
as $X \ra \infty.$ 
\end{theo}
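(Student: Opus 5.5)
The plan is to translate solvability of \eqref{eq:negpell} into a statement about the $2$-primary part of class groups and then run Smith's iterative machinery. First, for Pellian $d$ the negative Pell equation is solvable iff the fundamental unit has norm $-1$, iff the narrow class group $\Cl^+_K$ of $K=\Q(\sqrt d)$ equals $\Cl_K$. Equivalently, the natural surjection $\Cl^+_K[2^\infty]\ra \Cl_K[2^\infty]$ is an isomorphism. Genus theory fixes $\Cl^+_K[2]$, of dimension $\omega(d)-1$. Since every odd prime of $d$ is $1$ mod $4$, the class of the ideal $(\sqrt d)$ lies in $\Cl^+_K[2]$, and the image of the infinite place gives a distinguished element $\delta\in\Cl^+_K[2]$. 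Solvability becomes the condition that, for every $k$, $\delta$ is not in $2^{k-1}\Cl^+_K[2^k]$ in a suitable sense. Concretely, this means the $2^k$-Artin pairing (Rédei matrix at level $1$, Cassels--Tate-type pairing at level $k$) fails to absorb the distinguished vector at some stage. Stevenhagen's heuristic models this as follows: at each step one has a random matrix over $\F_2$ together with a marked vector, and the event is that the chain reaches the absorbing state in the "$\delta$ escapes" way. Summing over the Markov chain gives $1-\prod_{j\ \mathrm{odd}}(1-2^{-j})$. I would first verify this combinatorial identity for the model, as a purely random-matrix computation.

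Second, I would set up the arithmetic in Smith's framework. I would work in grids of Pellian twists $d=p_1\cdots p_r$ with all $p_i\equiv 1 \bmod 4$. At level $1$ I would use the Rédei matrix, which is symmetric here by quadratic reciprocity since all primes are $1$ mod $4$. The Fouvry--Klüners type equidistribution of Legendre symbols in the grid (the analogue of \parencite[Theorem 5.2]{smith1}) gives the joint distribution of $2\Cl^+_K[4]$, $2\Cl_K[4]$ and the position of $\delta$. For higher $k$, I would use the grid relations of Proposition~\ref{pr:grid} and their higher-dimensional analogues to link the level-$k$ pairings across a $k$-dimensional subgrid. The governing-expansion argument \parencite[Lemma 8.10, Proposition 7.9]{smith1} should then yield equidistribution of the level-$k$ pairing, now viewed as a pairing on the narrow side with the extra marked vector, over higher grid classes. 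Iterating gives the Markov chain with the marked vector, and hence the conjectured proportion on grids.

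Third, I would pass from grids to all Pellian $d<X$ by the interval-to-grid approximation of \parencite[\S 8]{smith2}. This needs a restriction to a density-one subset of Pellian $d$ with many well-spaced prime factors, and the proof must check that this subset carries full relative density among the $cX/\sqrt{\log X}$ Pellian integers.

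The main obstacle is the second step. The narrow and ordinary class groups differ only by the archimedean place, and the relevant pairing is symmetric rather than alternating, so the diagonal of the pairing carries the information about $\delta$. Smith's methods do not directly control diagonal entries, as noted in the discussion of fixed point Selmer groups. My first attempt would be to encode $\delta$ by the prime $2$ or the place at infinity as an auxiliary "prime" variable, and to exploit the special symmetry forced by all primes being $1$ mod $4$. The aim is to show that the needed diagonal-type entries are themselves governed by Rédei-type symbols of triples of primes, as in \parencite{koymanssmith2cubes}, and then to prove a trilinear equidistribution statement for those symbols.
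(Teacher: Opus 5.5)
There is a genuine gap, and it sits in the step that carries the new content. Your overall architecture matches the sketch of \parencite{koymanspagano} given here: the narrow class group with the distinguished class $\delta=[(\sqrt d)]$, Smith's grids and iteration, then passage from grids to intervals. But your diagnosis of the obstacle is off, and the missing ingredient is not supplied.

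Beyond the $4$-rank, nothing is symmetric. In the model, $n$-YES moves to NO with probability $1-2^{-n}$ and to $j$-YES with probability $2^{-n}P^{\text{Mat}}(j|n)$. These transitions are governed by arbitrary square matrices; symmetry enters only through the initial R\'edei matrix. So the difficulty is not a diagonal-entries problem like the odd-$\ell$ issue in Smith's fixed-point Selmer discussion.

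Instead, $\delta$ enters as an \emph{extra column}. The group $2^kC(K)[2^{k+1}]$ is cut out by an $n\times(n+1)$ matrix whose last column records how $(\sqrt d)$ pairs with $2^{k-1}C(K)[2^k]$. That column vanishes exactly when $\delta\in 2^kC(K)$. Its entries are symbols one of whose arguments is the class of $(\sqrt d)$ (equivalently, the infinite place), whatever grid you choose.

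Smith's equidistribution input (\cite[Theorem 5.2]{smith1}, used together with the grid relations of Proposition~\ref{pr:grid}) needs every argument to range over independently chosen primes. So your step~2 claim that the governing-expansion argument ``should then yield equidistribution \ldots with the extra marked vector'' is exactly where Smith's method breaks down. Treating $\infty$ as an auxiliary ``prime variable'' does not help, since it never varies. A trilinear equidistribution statement as in \parencite{koymanssmith2cubes} would likewise need that slot to vary. What Koymans and Pagano actually use is a new reciprocity law for higher R\'edei symbols \parencite{koymanspagano:higherredei}, which lets them control the symbols with an argument pinned to $(\sqrt d)$. Without it, or a substitute, the extra column is uncontrolled and the Markov chain is not established.

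Separately, your first step has the key condition reversed. Solvability means $\delta=0$, i.e.\ $\delta\in 2^{k-1}\Cl^+_K$ for \emph{every} $k$, so the chain must be absorbed at $0$-YES. Your condition ``$\delta\notin 2^{k-1}\Cl^+_K[2^k]$'', read as ``for some $k$'', describes the insoluble case, whose limiting probability is $\prod_{j\ \text{odd}}(1-2^{-j})$. Also, $\delta\in\Cl^+_K[2]$ holds for every real quadratic field. What the Pellian hypothesis supplies is $\delta\in 2\Cl^+_K$, which is why the chain starts in a YES state with the $4$-rank distributed like the kernel of a random symmetric matrix.
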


The reader may now ask:  what does any of this have to do with the Cohen-Lenstra conjectures?  The key is that the negative Pell equation can be thought of as a question about the class group.  Write $K = \Q(\sqrt{d})$.  The narrow class group of $K$, which we shall denote $C(K)$\footnote{With apologies to readers of \parencite{koymanspagano}, where the narrow class group is denoted $\Cl(K)$; but in this writeup that notation is reserved for the ordinary class group.}, is the quotient of the group of fractional ideal classes by the group of principal ideals generated by totally positive elements of $\OO_K$. 

\begin{prop}
    The negative Pell equation $x^2 - d y^2 = -1$ is solvable if and only if the principal ideal $(\sqrt{d})$ is trivial in the narrow class group $C(\Q(\sqrt{d}))$.
    \label{pr:negpellcg}
\end{prop}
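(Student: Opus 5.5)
The plan is to turn both sides of Proposition~\ref{pr:negpellcg} into a statement about the sign of the norm of a unit of $K=\Q(\sqrt{d})$. The argument has three steps.

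\emph{Step 1: restate the narrow class group condition.} The ideal $(\sqrt{d})$ is trivial in $C(K)$ if and only if $(\sqrt{d})=(\alpha)$ for some totally positive $\alpha\in\OO_K$. Since two generators of the same principal ideal differ by a unit, this holds if and only if there is a unit $\varepsilon\in\OO_K^\times$ with $\varepsilon\sqrt{d}$ totally positive.

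\emph{Step 2: read off the norm of $\varepsilon$.} Under the two real embeddings, $\sqrt{d}$ goes to $\sqrt{d}>0$ and $-\sqrt{d}<0$. So $\varepsilon\sqrt{d}$ is totally positive exactly when $\varepsilon>0$ and its conjugate $\varepsilon'<0$. Such a unit exists if and only if $\OO_K$ has a unit of norm $-1$. Indeed, a unit of norm $-1$ has conjugates of opposite sign, and after replacing $\varepsilon$ by $-\varepsilon$ if necessary we may take $\varepsilon>0$. Conversely, $\varepsilon>0>\varepsilon'$ forces $N(\varepsilon)=\varepsilon\varepsilon'=-1$. Hence $(\sqrt{d})$ is trivial in $C(K)$ if and only if $\OO_K$ contains a unit of norm $-1$.

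\emph{Step 3: compare units of $\OO_K$ with integer solutions.} A solution of $x^2-dy^2=-1$ in integers gives the unit $x+y\sqrt{d}\in\Z[\sqrt{d}]\subseteq\OO_K$ of norm $-1$. If $d\not\equiv 1\pmod 4$, then $\OO_K=\Z[\sqrt{d}]$ and the converse is immediate.

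The main obstacle is the converse when $d\equiv 1\pmod 4$. Here $\OO_K=\Z[(1+\sqrt{d})/2]$, so a unit of norm $-1$ might have half-integral coordinates, as with $(1+\sqrt5)/2$. The fix is to use $\Z[\sqrt{d}]=\Z+2\OO_K$, which has index $2$ in $\OO_K$, so a unit lies in $\Z[\sqrt{d}]$ exactly when its image in $\OO_K/2\OO_K$ lies in $\F_2$. If $2$ is inert ($d\equiv 5\pmod 8$), then $(\OO_K/2\OO_K)^\times=\F_4^\times$ has order $3$, so $\varepsilon^3\equiv 1\pmod{2\OO_K}$. If $2$ splits ($d\equiv 1\pmod 8$), then $(\OO_K/2\OO_K)^\times=(\F_2\times\F_2)^\times$ is trivial, so already $\varepsilon\equiv 1\pmod{2\OO_K}$. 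In either case $\varepsilon^3\in\Z[\sqrt{d}]$ and $N(\varepsilon^3)=(-1)^3=-1$. Writing $\varepsilon^3=x+y\sqrt{d}$ with $x,y\in\Z$ gives the required integer solution of \eqref{eq:negpell}, which completes the equivalence.
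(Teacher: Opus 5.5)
Your proof is correct, and Steps 1 and 2 are exactly the paper's argument. A totally positive generator of $(\sqrt{d})$ has the form $\sqrt{d}\,u$ with $u\in\OO_K^\times$. The opposite signs of $\sqrt{d}$ under the two embeddings force $u$ to have embeddings of opposite sign, hence norm $-1$, and the converse runs the same way. Your sign adjustment $\varepsilon\mapsto-\varepsilon$ makes explicit a step the paper leaves implicit.

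Your Step 3 goes beyond the paper, which tacitly equates integer solutions of $x^2-dy^2=-1$ with units of norm $-1$ in $\OO_K$. When $d\equiv 1 \pmod 4$, $(\OO_K/2\OO_K)^\times$ has order $1$ or $3$, so $\varepsilon^3\in\Z+2\OO_K=\Z[\sqrt{d}]$ with $N(\varepsilon^3)=-1$. That argument correctly closes the gap.
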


\begin{proof}
If $(\sqrt{d})$ is trivial in the narrow class group, it is generated by a totally positive element $a$, which must be $\sqrt{d}u$ for some unit $u \in \OO_K^*$.  Since the embeddings of $\sqrt{d}$ have opposite signs, so do the embeddings of $u$, which means $u$ has norm $-1$.  Conversely, if there is a unit $u$ of norm $-1$, then $\sqrt{d} u$ is the desired totally positive generator of $(\sqrt{d})$.
\end{proof}

Another equivalent formulation is that \eqref{eq:negpell} is solvable if and only if the narrow class group and the usual class group are equal.

From Proposition~\ref{pr:negpellcg}, we see that statistical questions about the solvability of the negative Pell equation are equivalent to statistical questions about the class group.  But more: because $(\sqrt{d})$ is clearly an element of order $2$ in $C(K)$, these are questions that require only knowledge of the $2$-primary part of $C(K)$.  So we are squarely in Cohen-Lenstra territory.  Note that this point of view nicely explains Dirichlet's results.  When $p$ is a prime congruent to $1$ mod $4$, we know by genus theory that $|C(K)|$ is odd, so the order $2$ element $(\sqrt{d})$ is certainly $0$.  When $d = pq$, genus theory tells us that $C(K)/2C(K) \cong \Z/2\Z$, and also that $(\sqrt{d})$ is trivial in $C(K)/2C(K)$. So if $(\sqrt{d})$ is an element of $2 C(K)$ of order $2$ and is {\em not} trivial, it must be the case that $C(K)$ has an element of exact order $4$; the criterion for this to be the case is exactly that $(\frac{p}{q}) = 1$.  Thus, when $(\frac{p}{q}) = -1$, we know that $(\sqrt{d})$ is trivial, which is to say that \eqref{eq:negpell} is solvable.

This story, together with what we've seen in Smith's work, already suggests what in broad outline the argument of \parencite{koymanspagano} may look like.  We will want to know the probability that $(\sqrt{d})$ vanishes in $C(K)/2C(K)$; if it does, we will want to know the probability that this element of $2C(K)$ vanishes in $2C(K)/4C(K)$; if it does, we will take a deep breath and look at $4C(K)/8C(K)$, and so on.  If everything goes right, we will expect to end up with a Markov model which not only captures the behavior of $C(K)[2^\infty]$ as in \parencite{smith1}, but of $C(K)[2^\infty]$ together with a specified $2$-torsion element in that group.

Let's imagine what such a Markov model might look like.  We have already described the Markov model that describes the $2$-primary part of the class group of a random quadratic imaginary field.  It turns out that the narrow class groups of random real quadratic fields are governed by a similar Markov model; but in this case, the transition probability from $r_{2^k} = n$ to $r_{2^{k+1}} = j$ is given by the probability that the kernel of a random $n \times (n+1)$ matrix over $\F_2$ is $j$, rather than being governed by the kernel of a square $n \times n$ matrix as in Theorem \ref{th:smithcg}.  This ``slightly rectangular'' Markov process is in keeping with what happens in the classical Cohen-Lenstra setting, where the class group of a quadratic imaginary field behaves like a random abelian group, while the class group of a real quadratic field behaves like a random abelian group modulo a random element. Adding the $n+1$'st row to the square matrix imposes one more random relation, which is tantamount to modding out the kernel of the square matrix by a random element of that kernel.

For the negative Pell problem, there are two features that complicate this model.  First of all, there is a difference at the very first layer (or second, depending on how you count).  The dimension of $C(K)/2C(K)$ is $t-1$, where $t$ is the number of ramified primes in $K$.  The dimension of $2C(K)[4]$ is $t-1-r$, where $r$ is the rank of the R\'{e}dei matrix we met in \S\ref{s:equidistsymbols}, whose off-diagonal entries are Legendre symbols of the primes ramified in $K$.  When all the odd ramified primes are congruent to $1$ modulo $4$, which is our ongoing hypothesis, this matrix is {\em symmetric}, by quadratic reciprocity.  So the $4$-rank in this Pellian setting is modeled by the kernel of a random large {\em symmetric} matrix over $\F_2$, not the kernel of an arbitrary large random matrix as in Theorem~\ref{th:smithcg}.

The careful reader should now protest: I just made a big deal about the matrix being slightly rectangular, not square, and here it is being square again!  This brings us to the second point.  We need to keep in mind that in the real quadratic setting, $C(K)$ always has a canonical element in it; namely, the class of $(\sqrt{d})$.  Suppose we have already established that $2^{k-1} C(K)[2^k]$ has rank $n$ and that $(\sqrt{d})$ lies in $2^{k-1}C(K)$.  Then $2^k C(K)[2^{k+1}]$ will be expressed as the quotient of $2^{k-1} C(K)[2^k]$ by $n$ random elements, and in addition by the class of $(\sqrt{d})$.  Now if $(\sqrt{d})$ happens to be a multiple of $2^k$, it pairs to zero with everything in $2^{k-1} C(K)[2^k]$; this makes the ''extra" column zero, which means that $2^k C(K)[2^{k+1}]$ in this case will be isomorphic to the kernel of a random $n \times n$ matrix over $\F_2$.

If this account is correct, and if every matrix mentioned above is taken to be random, the negative Pell problem is governed by a Markov chain with a state labeled NO and a state labelled $n$-YES for each nonnegative integer $n$.  At time $k$, being in the state NO means that $(\sqrt{d})$ does not lie in $2^{k-1}C(K)$.  Being in the state $n$-YES means that $(\sqrt{d})$ {\em does} lie in $2^{k-1}C(K)$, and that the dimension of $2^{k-1}C(K)[2^k]$ is $n$.  This yields the following expected transition probabilities.  At time $2$, we are in state $n$-YES if a random large symmetric matrix over $\F_2$ has kernel of dimension $n$, and are never in state NO.  (This is because the Pellian condition already implies that $(\sqrt{d})$ lies in $2C(K)$.)  Now, at time step $k$, if we are in $n$-YES, the probability of moving to NO at time $k+1$ is the probability that $(\sqrt{d})$ does not lie in $2^k C(K)$. Again assuming everything is as random as it can be, this probability is $1-|2^{k-1}C(K)/2^kC(K)|^{-1} = 1-2^{-n}$.  Assuming on the other hand that $(\sqrt{d})$ {\em does} lie in $2^k C(K)$, the last column of the $n \times (n+1)$ matrix determining $2^kC(K)[2^{k+1}]$ is zero, so $2^kC(K)[2^{k+1}]$ has dimension $j$ with the probability we've denoted $P^{\text{Mat}}(j|n)$.  Thus, the overall transition probability from $n$-YES to $j$-YES is $2^{-n} P^{\text{Mat}}(j|n)$.

What Koymans and Pagano prove is that all these heuristics are correct, albeit with an even more hilariously slow rate of convergence than we saw in Smith's theorems above.  Here is the precise statement.  Following the authors, we denote by $\mathcal{D}(X)$ the set of Pellian discriminants $d$ in $[0,X]$, and by $\mathcal{D}_{i,n_i}(X)$ the subset consisting of all those Pellian discriminants $d$ such that the $2^i$-rank of $C(\Q(\sqrt{d}))$ is $n_i$ and $(\sqrt{d})$ lies in $2^{i-1} C(\Q(\sqrt{d}))$.

\begin{theo}[\parencite{koymanspagano}, Theorem 8.1]
There are real numbers $A, c, X_0 > 0$ such that for all reals $X > X_0$, all integers $m \ge 2$ and all sequences of integers $n_2, \dots, n_{m+1} \ge 0$,

\[
\left| \left| \bigcap_{i=2}^{m+1} \mathcal{D}_{i,n_i}(X) \right| - \frac{P^{\text{Mat}}(n_{m+1}|n_m)}{2^{n_m}} \cdot \left| \bigcap_{i=2}^m \mathcal{D}_{i,n_i}(X) \right| \right| \le \frac{A \cdot |\mathcal{D}(X)|}{(\log \log \log \log X)^{\frac{c}{m^2 6^m}}}.
\]
\label{th:kpmarkov}
\end{theo}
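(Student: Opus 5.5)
The plan is to follow the architecture of Smith's argument from \S\ref{s:smith}, adapted to carry the distinguished class $(\sqrt{d})$ along. The proof proceeds in three stages.

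\textbf{Stage 1: pass from the interval to grids.} As in Section 8 of \parencite{smith2}, I will cover all but a negligible proportion of $\mathcal{D}(X)$ by a union of grids of twists built from primes congruent to $1$ mod $4$. Each grid must have large dimension, large sidelengths, and well-separated prime sizes. Typical $d$ have about $\log\log X$ prime factors, so the loss incurred here is a power of $\log\log X$. This is acceptable against the target error.

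\textbf{Stage 2: induction on $m$ within a grid.} The base case is the distribution of the $4$-rank. By R\'edei's description, it is the kernel of a symmetric Legendre-symbol matrix (symmetric by quadratic reciprocity under the Pellian condition). Its equidistribution over a grid follows from bilinear equidistribution of Legendre symbols, in the style of \parencite{fouvrykluners} and \parencite[Theorem 5.2]{smith1}. The Pellian condition already puts $(\sqrt{d})$ in $2C(K)$.

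For the inductive step, fix a ``higher grid class'' on which $2^{m-1}C(K)[2^m]$ has dimension $n_m$ and $(\sqrt{d})\in 2^{m-1}C(K)$. Here I will use the Artin/Cassels--Tate-type pairing on $2^{m-1}C(K)[2^m]\times 2^{m-1}C(K)^\vee[2^m]$. The kernels of this pairing compute $2^mC(K)[2^{m+1}]$, exactly as in \parencite{morgansmith}. Two statements are needed:
\begin{enumerate}
\item The pairing of $(\sqrt{d})$ with the dual classes decides whether $(\sqrt{d})\in 2^mC(K)$. We need this extra column to be zero with probability $2^{-n_m}$.
\item Conditional on that column being zero, the remaining $n_m\times n_m$ block must equidistribute among all matrices over $\F_2$. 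At this level no symmetry survives, which is why $P^{\text{Mat}}$ rather than a symmetric analogue appears.
\end{enumerate}
The mechanism is Proposition~\ref{pr:grid} and its $m$-dimensional generalization. In a cube of side $2$ and dimension $m$ inside the grid, the pairings at the $2^m$ vertices satisfy a linear relation whose shape is fixed by the governing expansion. Combining these relations with equidistribution of the symbols gives the needed cancellation over most cubes, as in \parencite[Prop 7.9, Thm 6.2]{smith1}.

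\textbf{Main obstacle: making the column for $(\sqrt{d})$ equidistribute.} $(\sqrt{d})$ is not a random class. It is canonically attached to $d$, and it transforms under the grid relations differently from a generic Selmer element. I would first try to realize the pair $(C(K),(\sqrt{d}))$ as a Selmer-type structure, namely the class group of $\OO_K$ with an extra local condition at the archimedean places or at $2$. The goal is that the distinguished element becomes a genuine column of an $n_m\times(n_m+1)$ matrix to which the same grid relations apply.

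Concretely, I would check that the vertex relation for the extra column is a nontrivial character of the governing expansion outside a rare exceptional set. Then the multilinear cancellation kills its average, yielding the factor $2^{-n_m}$. If that fails, a trilinear equidistribution of R\'edei-type symbols, as in \parencite{koymanssmith2cubes}, would be the fallback.

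\textbf{Stage 3: bookkeeping.} The remaining task is to track the error terms. Each inductive step requires grids whose sidelengths are exponential in the previous ones, and the cube-combinatorics cost grows like $6^m$. Iterating over $m$ steps, starting from the $\log\log X$ primes available, should produce the stated error $(\log\log\log\log X)^{-c/(m^26^m)}$. This error is uniform in the $n_i$, because states with large $n_i$ are themselves rare.
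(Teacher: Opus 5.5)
Your overall architecture is the skeleton Koymans and Pagano inherit from Smith: reduce to grids, iterate over higher grid classes using the Artin/Cassels--Tate pairing and the cube relations behind Proposition~\ref{pr:grid}, then do the bookkeeping. You have also located the obstacle correctly. But the step you call the main obstacle is where the proof actually lives, and your plan does not supply it.

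Smith's cancellation works because the governing symbols in the cube relations are symbols of primes you are free to choose independently from the sides of the grid. The equidistribution inputs you cite, \cite[Theorem 5.2]{smith1} and the trilinear R\'edei-symbol results of \parencite{koymanssmith2cubes}, are statements about exactly such symbols. The entries of the $(\sqrt{d})$-column are not of this form. $(\sqrt{d})$ is the product of \emph{all} the ramified primes of $\Q(\sqrt{d})$, including those moving along the cube. So whatever grid you choose, one argument of the relevant symbols is forced on you and moves with the vertex rather than independently of it. Recasting this as an extra local condition at $\infty$ does not help, since $\infty$ is a fixed place, not a free variable. Hence ``check that the vertex relation for the extra column is a nontrivial character of the governing expansion outside a rare exceptional set'' restates what must be proved rather than giving a route to it. Your trilinear fallback has the same defect.

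Nor can $(\sqrt{d})$ be adjoined as one more column obeying ``the same grid relations'' as the others. On the set you condition on, $(\sqrt{d})\in 2^{m-1}C(K)[2^m]$, so its pairing vector is the image of $(\sqrt{d})$ under the $n_m\times n_m$ Artin pairing. That is, it is a combination of rows of that matrix, and zero if $(\sqrt{d})$ is trivial. What must be proved is therefore a joint statement about the Artin pairing and the unknown, possibly zero, position of $(\sqrt{d})$ in that space.

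The theorem itself shows this coupling is real. Start from $4$-rank $1$. In the NO branch, $(\sqrt{d})$ spans $2C(K)[4]$ but is not in $4C(K)$, which forces $8$-rank $0$. So the $8$-rank vanishes with probability $1/2+1/4=3/4$ (the NO transition plus the $0$-YES transition). An equidistributed $1\times 1$ pairing would give $1/2$. So in the Pellian family the Artin pairing is \emph{not} equidistributed, as it would be if every entry were governed by free symbols.

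The missing ingredient is the one the paper points to: the new reciprocity law of Koymans and Pagano \parencite{koymanspagano:higherredei}, a higher analogue of R\'edei reciprocity. It is what lets the symbols with the forced argument $(\sqrt{d})$ be controlled and brought within reach of the equidistribution machinery. Without an identity of this kind, your Stage~2 has no mechanism producing the factor $2^{-n_m}$ with the required independence from the rest of the pairing. The error-term numerology in Stages~1 and~3 is also asserted rather than derived, though that is secondary.
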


The Markov chain we described above has two absorbing states, NO and 0-YES.  The probability that a Pellian discriminant $d$ admits a solution to the negative Pell equation is then the probability that the chain eventually reaches 0-YES.  This comes out to be the probability conjectured by Stevenhagen, and now proven by Koymans and Pagano.

One notes that, conditional on terminating at 0-YES, the distribution on $2C(K)[2^\infty]$ is very similar to that of quadratic imaginary fields; the $2C(K)[4]$ part is different, thanks to the symmetry of the R\'{e}dei matrix, but the Markov chain from that point onward is identical with that in Smith's result reproduced here as Theorem~\ref{th:smithcg}.  One might think of the real quadratic fields with negative Pell solutions as being a family of ``fake quadratic imaginary fields" -- as when $d$ is negative, the narrow class group is equal to the class group, and the Markov chain is the one worked out by Smith. 

The method by which Koymans and Pagano prove Theorem~\ref{th:kpmarkov} is certainly indebted to Smith's work, but goes further.  One key difference (but far from the only one) is that, in Smith, it is crucial to have the flexibility to choose grids so that the symbols you have to compute are essentially pairings of random primes chosen independently.  In the negative Pell setting, some of the symbols involved, no matter what grid you work with, have an argument that's forced upon you; the class of $(\sqrt{d})$.  The difficulty here is eventually overcome by the use of a novel reciprocity law introduced in the earlier paper~\parencite{koymanspagano:higherredei}.  The subject remains in a state of rapid development, and it seems likely that time will distill the new spray of reciprocity laws and equidistribution statements found in \parencite{koymanspagano:highergenus,koymanssmith2cubes}, etc. into a more general theory whose shape for now remains only partially in view.

\section{Into the nonabelian}

\label{s:nonabelian}

We have actually undersold the results of \parencite{lwzb} and \parencite{landesmanlevy} discussed in \S\ref{s:clm}.  By class field theory, the class group of $K$ is isomorphic to the Galois group of the maximal everywhere-unramified abelian extension of $K$.  If $K^{nr}$ is the compositum of all unramified algebraic extensions of $K$, this group can be written as the abelianization of $\Gal(K^{nr}/K)$, and we ask how this group varies as $K$ moves in a family of number fields or function fields.

But who said we need to abelianize?  Why not ask about the variation of the whole profinite group $\Gal(K^{nr}/K)$ as $K$ varies through a family of number fields?

Many difficulties present themselves at once.  The isomorphism classes of finite abelian groups are countable in number and we know how to describe them all.  But $\Gal(K^{nr}/K)$ need not be finite, as we know from the Golod-Shafarevich construction~\parencite{golodshafarevich}.  We do not even know whether this group is topologically finitely generated. We may make our life easier by choosing an odd prime $p$ and considering only the maximal pro-$p$ quotient $\Gal(K^{nr;p}/K)$ of $\Gal(K^{nr}/K)$, which is closer to the spirit of Cohen-Lenstra.  Then at least our groups are topologically finitely generated (this being the case for any pro-$p$ group with finite abelianization) but they still may be infinite by Golod-Shafarevich.  The idea that there is nonetheless a meaningful conjecture to make was championed by Nigel Boston, who died in 2024 and whose early contribution to this line of inquiry has been essential to its development.  When $K$ is a quadratic imaginary field, $\Gal(K^{nr;p}/K)$ carries an involutive automorphism $\sigma$ coming from the action of $\Gal(K/\Q)$, which acts as $-1$ on $\Gal(K^{nr;p}/K)^{ab}$.    The paper \parencite{bbh} defines a measure $\mu_{BBH}$ on the set of isomorphism classes of finitely generated pro-$p$ groups with finite abelianization and endowed with such an automorphism, and proposes the heuristic that for any ``reasonable" function $f$ on the isomorphism classes of such pro-$p$ groups, the average of $f(\Gal(K^{nr;p}/K))$ as $K$ ranges over imaginary quadratic fields is the integral of $f$ against $\mu_{BBH}$.  As one would hope, the pushforward of $\mu_{BBH}$ from finitely generated pro-$p$ groups to finite abelian groups is just the Cohen-Lenstra measure.

As in the Cohen-Lenstra setting, it is reasonable to ask about moments.  In other words:  if $H$ is a finite $p$-group with a generator-inverting automorphism $\sigma$, we can define the $H$-moment of a measure $\mu$ to be the expected number of $\sigma$-invariant surjections from $G$ to $H$, where $G$ is drawn from $\mu$.  \parencite{bostonwood} proves the remarkable theorem that every moment of the Boston-Bush-Hajir distribution $\mu_{BBH}$ is $1$!

So a reasonable way of investigating the non-abelian Cohen-Lenstra conjecture of Boston, Bush, and Hajir is to try to compute these moments when $G$ is $\Gal(K^{nr;p}/K)$ with $K$ a random quadratic imaginary field.  More precisely, $K$ is a random quadratic imaginary field with discriminant in $[-X,0]$, and we hope that the $H$-moment $m_{H,X}$ of $\Gal(K^{nr;p}/K)$ always approaches $1$ as $X \ra \infty$.

The objects counted by $m_{H,X}$ are everywhere unramified $H$-extensions of $K$ with a specified action of $\Gal(K/\Q)$ on $H$.  We can also think of such an extension as a $H \rtimes (\Z/2\Z)$-extension of $\Q$, ramified only at primes dividing the discriminant $D_K$ and with inertia groups of order $2$ at those primes.  This connection is older than the Cohen-Lenstra conjecture itself; \parencite{davenportheilbronn}[Theorem 3] computes the average size of $\Cl_K[3]$ as $K$ ranges over quadratic imaginary fields by relating this quantity to the number of $S_3$-extensions of $\Q$ with discriminant at most $X$ whose inertia group at every odd prime has order $2$.  (Note that the average size of $\Cl_K$ is the $(\Z/3\Z)$ moment, and $(\Z/3\Z) \rtimes (\Z/2\Z)$ with $\Z/2\Z$ acting as $-1$ is a fancy name for $S_3$.)

This brings us into contact with {\em Malle's conjecture}, another very active current in arithmetic statistics which could support a separate survey paper of its own.  Malle's conjecture concerns questions of the form:  how many $H$-extensions of $\Q$ are there with discriminant of absolute value at most $X$?  (And more generally, one can ask about extensions of arbitrary number fields or function fields, extensions with restrictions on ramification type, extensions counted by invariants other than discriminant, etc.)  In the case considered in \parencite{bostonwood}, the moments being $1$ for every $H$ indeed conforms with what Malle predicts.

The paper \parencite{lwzb} takes this approach even further.  Here, one considers an arbitrary finite group $\Gamma$, one lets $K$ range over $\Gamma$-extensions of $\Q$ or of $\F_q(t)$ whose ramified primes have product of absolute value at most $X$, and asks about $\Gal(K^{nr}/K)$; not just the pro-$p$ quotient, but the whole profinite shebang.   (Well, almost the whole profinite shebang; we will still restrict to the maximal prime-to-$2|\Gamma|$ quotient which you'll note still keeps us separated from the $2$-group extensions of quadratic extensions considered by Smith and Koymans-Pagano.) This Galois group is a profinite group carrying an action of $\Gamma$. Liu, Wood, and Zureick-Brown define a probability distribution on such groups.  Roughly, it is defined as follows.  Let $F_{n,\Gamma}$ be the {\em free profinite $\Gamma$-group on $n$ generators}; it is generated by $n|\Gamma|$ elements $x_{i,\gamma}$, and is supplied with a $\Gamma$-action by the rule $\gamma' \cdot x_{i,\gamma} = x_{i, \gamma' \gamma}$.  We then let $\FF_{n, \Gamma}$ be the closed subgroup of $F_{n,\Gamma}$ topologically generated by the elements $x_{i,id} x_{i,\gamma}^{-1}$ and their images under $\Gamma$. 

Having done this, they define $X_{n, \Gamma}$ to be a random variable valued in profinite groups defined as the quotient of $\FF_{n,\Gamma}$ by the $(n+1)|\Gamma|$ relations $r^{-1} \gamma(r)$, as $\gamma$ ranges over $\Gamma$ and $r$ ranges over a set of $n+1$ elements chosen Haar-uniformly from $\FF_{n,\Gamma}$.  Finally, they show that as $n \ra \infty$, the distributions of these random variables converge to a measure on profinite groups with $\Gamma$-action, which they call $\mu_\Gamma$.  It is this measure which they conjecture governs the variation of the prime-to-$2|\Gamma|$ part of $\Gal(K^{nr}/K)$ as $K$ varies over $\Gamma$-extensions~\parencite[Conjecture 1.3]{lwzb} You might think of this as a non-abelian Cohen-Lenstra-Martinet conjecture ``at many odd primes at once."  When $\Gamma = \Z/2\Z$, the pushforward of their measure to the pro-$p$ quotient is $\mu_{BBH}$, and thus the pushforward to the abelianization of the pro-$p$ quotient is Cohen-Lenstra measure.

This measure has some properties which one might not have expected in advance.  For example, it assigns probability $0$ to any group which doesn't satisfy a certain lifting property the authors call ``property E."  Happily, the unramified Galois groups of $K$ automatically satisfy this property.

The history of the Cohen-Lenstra-Martinet conjectures, which, as you'll recall, had to be repeatedly modified as new phenomena were discovered, may reasonably give one pause here.  How do you know there's not some other subtle property which makes a profinite group $G$ unable, or even differentially unlikely, to appear as an unramified Galois group, and which is not accounted for in the conjecture in its present form?  Here we come to one of the most appealing features of \parencite{lwzb}, and of its successor papers as well.  The non-abelian Cohen-Lenstra-Martinet conjecture over $\F_q(t)$ can be expressed, just as in \ref{s:ffmoments}, as a question about counting points on certain Hurwitz spaces over finite fields.  The results of \parencite{wood:liftinginvariant} again provide a complete description of the connected components of these spaces, their dimensions, and the Frobenius action on them.  Deligne's bounds on Frobenius eigenvalues thus allow us to estimate the number of points on any such space as $q \ra \infty$.  The upshot is this: if $X_{H,q,n}$ is the number of unramified $H$-extensions of $K$, where $K$ is drawn at random from the set of $\Gamma$-extensions of $\F_q(t)$ the product of whose ramified primes has norm $q^n$, then the mean of $X_{H,q,n}$ approaches a limit as $q \ra \infty$, and the limit of {\em this} limit as $n \ra \infty$ is equal on the nose to the $H$-moment of the conjectured distribution $\mu_\Gamma$.  This is very powerful evidence that the choice of $\mu_\Gamma$ is correct, given that we are in the regime where moments determine distributions.\footnote{That moments determine distributions for this problem wasn't known at the time \parencite{lwzb} was written; that came later, with the work of Sawin and Wood already discussed.}  

The results of \parencite{landesmanlevy} provide even more compelling evidence; their theorems apply to the Malle conjecture generally, not just those cases related to Cohen-Lenstra, and their results allow one to show in many cases that the mean of $X_{H,q,n}$ approaches a limit as $n \ra \infty$ with {\em fixed} $q$.  However, it remains a boundary for now that this is known only when $q$ is sufficiently large relative to $H$; in other words, for any given $\F_q(t)$, there are only finitely many moments of the Liu-Wood-Zureick-Brown conjecture which can currently be shown to be correct.

As we have already seen, the presence of roots of unity in the base field complicates matters.  Forthcoming work of Sawin and Wood~\parencite{sawinwoodav} promises to extend the conjectures to the case of arbitrary base field.  This new work has an interesting feature which distinguishes it from everything that came before.  As we've seen, the distribution $\mu_\Gamma$ is determined by a certain {\em construction} of a random group, in the form of ``random generators and relations" -- this is a de-abelianized version of the original Friedman-Washington conception of the Cohen-Lenstra distribution as the distribution on the cokernel of a random large $\ell$-adic matrix.  One then computes the moments of the groups constructed thereby and compares them with moments arrived at by some other method.  \parencite{sawinwoodav} goes in the opposite direction.  What the moments should be is worked out by an algebro-geometric computation in the function field case, just as in \parencite{lwzb}.  But now the distribution is reconstructed directly from the moments using the technique introduced in \parencite{sawinwoodmoments}.  It is, in some sense, ``a distribution without a random variable."  It is an interesting question whether there's a more traditional construction of a random profinite group that obeys the distribution in  \parencite{sawinwoodav}.

The results described above are all concerned with prime-to-$|\Gamma|$ extensions of $\Gamma$-extensions of some base global field $F$.  What if we relax this assumption, or even enforce its opposite?  An example of such a question would be the study of everywhere-unramified $2$-extensions of random quadratic extensions of $F$.  If we append "abelian" to "everywhere-unramified," we are asking about $2$-primary class groups of quadratic extensions, the very question appearing in work of Smith, Koymans, and Pagano already described.  The study of $p$-primary class groups of $\Gamma$-extensions, where $p$ is now allowed to divide $|\Gamma|$, is developing rapidly; see e.g. \parencite{liu:cgae} and \parencite{koymansliu} for the current state of affairs.  But the maximal unramified $p$-extension of a random $\Gamma$-extension is, I believe, still not well-understood in this case.

One can imagine going still further.  When $\Gamma$ is a $p$-group, an unramified pro-$p$ extension of a $\Gamma$-extension $K/\Q$ is a pro-$p$ extension of $\Q$ whose ramified primes are only those of $K$. So one might ask the following question: let $N$ be a random integer and let $G_{N,p}$ be the Galois group of the maximal pro-$p$ extension of $\Q$ unramified away from $N$.   (For simplicity, let's require $(N,p) = 1$.) The group $G_{N,p}$ will not have a distribution, because the rank of its abelianization is essentially the number of primes dividing $N$ and congruent to $1$ mod $p$, which has infinite average as $N$ grows; this is the same reason that $\Cl_K[2]$ doesn't have a distribution as $K$ varies over quadratic fields.  Still, one may wish to study its statistics.  One way to address this is by fixing the number of primes dividing $N$; this is the approach taken in \parencite{bostonellenberg}, which formulates a conjecture for the probability that $G_{N,p}$ is isomorphic to a specified finite $p$-group when $N$ is the product of two random primes congruent to $1$ mod $p$.  Another approach, more in the spirit of Buell, Gerth, Smith, etc., would be to ask: what are the statistics of $G_{N,p}$ which play the role that the ranks of $2^m\Cl_K[2^{m+1}]$ do for $\Cl_K[2^\infty]$?  In other words, which statistics of $G_{N,p}$ {\em do} approach a distribution as $N$ varies over a large range of integers?

\printbibliography

@incollection{cohenlenstra,
  title={Heuristics on class groups of number fields},
  author={Cohen, Henri and Lenstra Jr, Hendrik W.},
  booktitle={Number Theory Noordwijkerhout 1983: Proceedings of the Journ{\'e}es Arithm{\'e}tiques held at Noordwijkerhout, The Netherlands July 11--15, 1983},
  pages={33--62},
  year={1983},
  publisher={Springer}
}

@inproceedings{wood:moments,
  title={Probability theory for random groups arising in number theory},
  author={Wood, Melanie Matchett},
  booktitle={Proc. Int. Cong. Math},
  volume={6},
  pages={4476--4508},
  year={2022}
}

@article{buell,
  title={Class groups of quadratic fields},
  author={Buell, Duncan A.},
  journal={Mathematics of Computation},
  volume={30},
  number={135},
  pages={610--623},
  year={1976}
}

@article{malleroots,
  title={Cohen--Lenstra heuristic and roots of unity},
  author={Malle, Gunter},
  journal={Journal of Number Theory},
  volume={128},
  number={10},
  pages={2823--2835},
  year={2008},
  publisher={Elsevier}
}

@article{wangwood,
  title={Moments and interpretations of the Cohen--Lenstra--Martinet heuristics},
  author={Wang, Weitong and Wood, Melanie Matchett},
  journal={Commentarii Mathematici Helvetici},
  volume={96},
  number={2},
  pages={339--387},
  year={2021}
}

@article{lwzb,
  title={A predicted distribution for Galois groups of maximal unramified extensions},
  author={Liu, Yuan and Wood, Melanie Matchett and Zureick-Brown, David},
  journal={Inventiones mathematicae},
  volume={237},
  number={1},
  pages={49--116},
  year={2024},
  publisher={Springer}
}

@article{wood2010,
  title={On the probabilities of local behaviors in abelian field extensions},
  author={Wood, Melanie Matchett},
  journal={Compositio Mathematica},
  volume={146},
  number={1},
  pages={102--128},
  year={2010},
  publisher={London Mathematical Society}
}

@article{sawinwood3manifolds,
  title={Finite quotients of 3-manifold groups},
  author={Sawin, Will and Wood, Melanie Matchett},
  journal={Inventiones mathematicae},
  volume={237},
  number={1},
  pages={349--440},
  year={2024},
  publisher={Springer}
}

@article{sawinwoodmoments,
  title={The moment problem for random objects in a category},
  author={Sawin, Will and Wood, Melanie Matchett},
  journal={arXiv preprint arXiv:2210.06279},
  year={2022}
}

@article{smith1,
  title={The distribution of $\ell^\infty$-Selmer groups in degree $\ell$ twist families I},
  author={Smith, Alexander},
  journal={Journal of the American Mathematical Society},
  volume={39},
  number={1},
  pages={1--72},
  year={2026}
}

@article{smith2,
  title={The distribution of $\ell^\infty$-Selmer groups in degree $\ell$ twist families II},
  author={Smith, Alexander},
  journal={Journal of the American Mathematical Society},
  volume={39},
  number={2},
  pages={453--514},
  year={2026}
}

@article{koymanssmith2cubes,
  title={Sums of rational cubes and the $3 $-Selmer group},
  author={Koymans, Peter and Smith, Alexander},
  journal={arXiv preprint arXiv:2405.09311},
  year={2024}
}

@article{klagsbrunlemkeoliver,
  title={The distribution of 2-Selmer ranks of quadratic twists of elliptic curves with partial two-torsion},
  author={Klagsbrun, Zev and Lemke Oliver, Robert J},
  journal={Mathematika},
  volume={62},
  number={1},
  pages={67--78},
  year={2016},
  publisher={Wiley Online Library}
}

@article{smithbsd,
  title={The Birch and Swinnerton-Dyer conjecture implies Goldfeld's conjecture},
  author={Smith, Alexander},
  journal={arXiv preprint arXiv:2503.17619},
  year={2025}
}

@article{bhargavashankar,
  title={Binary quartic forms having bounded invariants, and the boundedness of the average rank of elliptic curves},
  author={Bhargava, Manjul and Shankar, Arul},
  journal={Annals of Mathematics},
  volume={181},
  number ={1},
  pages={191--242},
  year={2015},
  publisher={JSTOR}
}

@article{bhargavagross,
  title={The average size of the 2-Selmer group of Jacobians of hyperelliptic curves having a rational Weierstrass point},
  author={Bhargava, Manjul and Gross, Benedict H.},
  journal={arXiv preprint arXiv:1208.1007},
  year={2012}
}

@article{poonenstoll,
  title={Most odd degree hyperelliptic curves have only one rational point},
  author={Poonen, Bjorn and Stoll, Michael},
  journal={Annals of mathematics},
  volume={180},
  number={3},
  pages={1137--1166},
  year={2014},
  publisher={JSTOR}
}

@article{stoll:independence,
  title={Independence of rational points on twists of a given curve},
  author={Stoll, Michael},
  journal={Compositio Mathematica},
  volume={142},
  number={5},
  pages={1201--1214},
  year={2006},
  publisher={London Mathematical Society}
}

@article{kmrmarkov,
  title={A Markov model for Selmer ranks in families of twists},
  author={Klagsbrun, Zev and Mazur, Barry and Rubin, Karl},
  journal={Compositio Mathematica},
  volume={150},
  number={7},
  pages={1077--1106},
  year={2014},
  publisher={London Mathematical Society}
}

@article{kane,
  title={On the ranks of the 2-Selmer groups of twists of a given elliptic curve},
  author={Kane, Daniel},
  journal={Algebra \& Number Theory},
  volume={7},
  number={5},
  pages={1253--1279},
  year={2013},
  publisher={Mathematical Sciences Publishers}
}

@article{taoziegler,
  title={An inverse theorem for the Gowers $U^{s+1} [N]$-norm},
  author={Green, Ben and Tao, Terence and Ziegler, Tamar},
  journal={Annals of Mathematics},
  pages={1231--1372},
  year={2012},
  publisher={JSTOR}
}

@article{morgansmith,
  title={The Cassels-Tate pairing for finite Galois modules},
  author={Morgan, Adam and Smith, Alexander},
  journal={arXiv preprint arXiv:2103.08530},
  year={2021}
}

@article{kim:triple,
  title={Triple symbols in arithmetic},
  author={Kim, Dohyeong and Morishita, Masanori},
  journal={Research in Number Theory},
  volume={11},
  number={86},
  year={2025},
  publisher={Springer}
}

@article{stevenhagen,
  title={The number of real quadratic fields having units of negative norm},
  author={Stevenhagen, Peter},
  journal={Experimental Mathematics},
  volume={2},
  number={2},
  pages={121--136},
  year={1993},
  publisher={Taylor \& Francis}
}

@article{koymanspagano,
  title={On Stevenhagen's conjecture},
  author={Koymans, Peter and Pagano, Carlo},
  journal={Acta Mathematica},
  year={2025},
  pages={to appear},
  publisher={International Press}
}

@article{koymanspagano:higherredei,
  title={Higher R$\backslash$'edei reciprocity and integral points on conics},
  author={Koymans, Peter and Pagano, Carlo},
  journal={arXiv preprint arXiv:2005.14157},
  year={2020}
}

@article{koymanspagano:highergenus,
  title={Higher genus theory},
  author={Koymans, Peter and Pagano, Carlo},
  journal={International Mathematics Research Notices},
  volume={2022},
  number={4},
  pages={2772--2823},
  year={2022},
  publisher={Oxford University Press}
}

@article{fouvrykluners,
  title={On the 4-rank of class groups of quadratic number fields.},
  author={Fouvry, {\'E}tienne and Kl{\"u}ners, J{\"u}rgen},
  journal={Inventiones mathematicae},
  volume={167},
  number={3},
  pages={455-513},
  year={2007}
}

@article{bostonellenberg,
  title={Random pro-$ p $ groups, braid groups, and random tame Galois groups},
  author={Boston, Nigel and Ellenberg, Jordan S},
  journal={Groups, Geometry, and Dynamics},
  volume={5},
  number={2},
  pages={265--280},
  year={2011}
}

@article{landesmanlevy,
  title={Homological stability for Hurwitz spaces and applications},
  author={Landesman, Aaron and Levy, Ishan},
  journal={arXiv preprint arXiv:2503.03861},
  year={2025}
}

@article{davenportheilbronn,
  title={On the density of discriminants of cubic fields. II},
  author={Davenport, Harold and Heilbronn, Hans Arnold},
  journal={Proceedings of the Royal Society of London. A. Mathematical and Physical Sciences},
  volume={322},
  number={1551},
  pages={405--420},
  year={1971},
  publisher={The Royal Society London}
}

@article{bostonwood,
  title={Non-abelian Cohen--Lenstra heuristics over function fields},
  author={Boston, Nigel and Wood, Melanie Matchett},
  journal={Compositio Mathematica},
  volume={153},
  number={7},
  pages={1372--1390},
  year={2017},
  publisher={London Mathematical Society}
}

@article{liu:cgae,
  title={On the distribution of class groups of abelian extensions},
  author={Liu, Yuan},
  journal={arXiv preprint arXiv:2411.19318},
  year={2024}
}

@article{koymansliu,
  title={Statistics of bad parts of class groups},
  author={Koymans, Peter and Liu, Yuan},
  journal={arXiv preprint arXiv:2512.22849},
  year={2025}
}

@article{wood:liftinginvariant,
  title={An algebraic lifting invariant of Ellenberg, Venkatesh, and Westerland},
  author={Wood, Melanie Matchett},
  journal={Research in the Mathematical Sciences},
  volume={8},
  number={2},
  pages={21},
  year={2021},
  publisher={Springer}
}

@article{park,
  title={On the prime Selmer ranks of cyclic prime twist families of elliptic curves over global function fields},
  author={Park, Sun Woo},
  journal={Compos. Math.},
  pages={to appear},
  year={2022}
}

@article{bbh,
  title={Heuristics for $p$-class towers of imaginary quadratic fields},
  author={Boston, Nigel and Bush, Michael R and Hajir, Farshid},
  journal={Mathematische Annalen},
  volume={368},
  number={1},
  pages={633--669},
  year={2017},
  publisher={Springer}
}

@article{orwbourbaki,
  title={Homology of Hurwitz spaces and the Cohen-Lenstra heuristic for function fields},
  author={Randal-Williams, Oscar},
  journal={S{\'e}minaire Bourbaki},
  volume = {71e},
  number={1162},
  year={2019}
}

@article{sawinwoodroots,
  title={Conjectures for distributions of class groups of extensions of number fields containing roots of unity},
  author={Sawin, Will and Wood, Melanie Matchett},
  journal={arXiv preprint arXiv:2301.00791},
  year={2023}
}

@article{gerthillinois,
  title={The $4 $-class ranks of quadratic extensions of certain imaginary quadratic fields},
  author={Gerth III, Frank},
  journal={Illinois Journal of Mathematics},
  volume={33},
  number={1},
  pages={132--142},
  year={1989},
  publisher={Duke University Press}
}

@article{gerth:4rank,
  title={The 4-class ranks of quadratic fields},
  author={Gerth III, Frank},
  journal={Inventiones mathematicae},
  volume={77},
  number={3},
  pages={489--515},
  year={1984},
  publisher={Springer}
}

@article{gerth:extension,
  title={Extension of conjectures of Cohen and Lenstra},
  author={Gerth III, Frank},
  journal={Exposition. Math},
  volume={5},
  number={2},
  pages={181--184},
  year={1987}
}

@article{gartonthunder,
  title={The distribution of $a$-numbers of hyperelliptic curves in characteristic three},
  author={Garton, Derek and Thunder, Jeffrey Lin and Weir, Colin},
  journal={Finite Fields and Their Applications},
  volume={109},
  pages={102715},
  year={2026},
  publisher={Elsevier}
}

@article{cohenmartinet,
  title={Class groups of number fields: numerical heuristics},
  author={Cohen, Henri and Martinet, Jacques},
  journal={Mathematics of Computation},
  volume={48},
  number={177},
  pages={123--137},
  year={1987}
}

@article{bklpr,
  title={Modeling the distribution of ranks, Selmer groups, and Shafarevich--Tate groups of elliptic curves},
  author={Bhargava, Manjul and Kane, Daniel M and Lenstra, Hendrik W and Poonen, Bjorn and Rains, Eric},
  journal={Cambridge Journal of Mathematics},
  volume={3},
  number={3},
  pages={275--321},
  year={2015},
  publisher={International Press of Boston, Inc. Somerville, MA 02143, USA}
}

@article{bartellenstra,
  title={On class groups of random number fields},
  author={Bartel, Alex and Lenstra Jr, Hendrik W},
  journal={Proceedings of the London Mathematical Society},
  volume={121},
  number={4},
  pages={927--953},
  year={2020},
  publisher={Wiley Online Library}
}

@article{garton,
  title={Random matrices, the Cohen--Lenstra heuristics, and roots of unity},
  author={Garton, Derek},
  journal={Algebra \& Number Theory},
  volume={9},
  number={1},
  pages={149--171},
  year={2015},
  publisher={Mathematical Sciences Publishers}
}

@inproceedings{friedmanwashington,
  title={On the distribution of divisor class groups of curves over a finite field},
  author={Friedman, Eduardo and Washington, Lawrence C},
  booktitle={Th{\'e}orie des nombres: Proceedings of the International Number Theory Conference held at Université Laval, July 5-18, 1987},
  pages={227--239},
  year={1989},
  organization={de Gruyter Berlin}
}

@article{koymansmilovic,
  title={On the 16-Rank of Class Groups of $\Q(\sqrt{-2p})$ for primes $p \equiv 1 \pmod 4$},
  author={Koymans, Peter and Milovic, Djordjo},
  journal={International Mathematics Research Notices},
  volume={2019},
  number={23},
  pages={7406--7427},
  year={2019},
  publisher={Oxford University Press}
}

@article{evw,
  title={Homological stability for Hurwitz spaces and the Cohen-Lenstra conjecture over function fields},
  author={Ellenberg, Jordan S and Venkatesh, Akshay and Westerland, Craig},
  journal={Annals of Mathematics},
  volume={183},
  number={3},
  pages={729--786},
  year={2016},
  publisher={JSTOR}
}

@article{sawinwoodav,
  title={Distributions of unramified extensions of global fields},
  author={Sawin, Will and Wood, Melanie Matchett},
  journal={arXiv preprint arXiv:2602.21032},
  year={2026}
}

@article{heathbrown:congruent1,
  title={The size of Selmer groups for the congruent number problem},
  author={Heath-Brown, D},
  journal={Inventiones Mathematicae},
  volume={111},
  number={1},
  year={1993},
  publisher={Springer-Verlag}
}

@article{heathbrown:congruent2,
  title={The size of Selmer groups for the congruent number problem, II},
  author={Heath-Brown, D},
  journal={Inventiones Mathematicae},
  volume={118},
  number={1},
  year={1994},
  publisher={Springer-Verlag}
}

@article{bjl,
  title={Arakelov class groups of random number fields},
  author={Bartel, Alex and Johnston, Henri and Lenstra Jr, Hendrik W},
  journal={Mathematische Annalen},
  volume={390},
  number={3},
  pages={4405--4428},
  year={2024},
  publisher={Springer}
}

@book{silverman,
  author    = {Silverman, Joseph H.},
  title     = {The Arithmetic of Elliptic Curves},
  series    = {Graduate Texts in Mathematics},
  volume    = {106},
  publisher = {Springer},
  address   = {New York},
  year      = {1986},
}

@article{monsky,
  title={Generalizing the Birch-Stephens theorem: I. Modular curves},
  author={Monsky, Paul},
  journal={Mathematische Zeitschrift},
  volume={221},
  number={1},
  pages={415--420},
  year={1996},
  publisher={Springer}
}

@article{dvir,
  title={On the size of Kakeya sets in finite fields},
  author={Dvir, Zeev},
  journal={Journal of the American Mathematical Society},
  volume={22},
  number={4},
  pages={1093--1097},
  year={2009}
}

@article{golodshafarevich,
  title={On the class field tower},
  author={Golod, Evgeniy Solomonovich and Shafarevich, Igor Rostislavovich},
  journal={Izvestiya Rossiiskoi Akademii Nauk. Seriya Matematicheskaya},
  volume={28},
  number={2},
  pages={261--272},
  year={1964},
  publisher={Russian Academy of Sciences, Steklov Mathematical Institute of Russian~…}
}

\end{document}

Please note that the bibliography style needs you to rephrase occasionally
some sentences since authors names are included:

\textcite{bertram} proves that every color is green.

Every color is green \parencite{bertram}.

The most usefull commands are \verb+\textcite+ and \verb+\parencite+;
\verb+\cite+ can be used also and produces :

\cite{bertram}
or
\cite{baez}

 Of course, these commands can take arguments:

\cite[Theorem~2]{bertram}

\textcite[Theorem~2]{bertram}

\parencite[Theorem~2]{bertram}

and multiple citations :

\cite{bertram, baez}

\textcite{bertram, baez}

\parencite{bertram, baez}

\medskip

It is always good to cite Bourbaki's talks, however the bib entry produced by
mathscinet or zbmath are quite inhomogeneous; please try to have it formatted
following this entry: \textcite{Oancea}.

Here is an arXiv entry: \textcite{Nguyen}, you can retrieve it directly from
arxiv.org, but it is sometimes easier to get it from zbmath.

You will sometimes need to cite the same work a lot, for this a special entry
can be added in the bib file (see that file):

Hereafter the article \textcite{llosa-pi-inventiones} will be
mentioned as \cite{llosa-pi-inventiones}. And you can cite a specific theorem
from that citation : \textcite[theorem 6.(1)]{llosa-pi-inventiones} (with
\verb+\textcite+) or \parencite[theorem 6.(1)]{llosa-pi-inventiones} (with
\verb+\parencite+) or \cite[theorem 6.(1)]{llosa-pi-inventiones} (with \verb+\cite+). \citelong{llosa-pi-inventiones}

\textcitelong{llosa-pi-inventiones}

Last, if we did not do so, you should change the class option (see the commented line at the
beginning of the file) so that the \enquote{et} are converted to
\enquote{and}.

%%% predefined theorems

%%% style "plain" :
\begin{theo}
  A theorem
\end{theo}

\begin{prop}[Gauss]
  A proposition
\end{prop}

\begin{conj}
  A conjecture
\end{conj}

\begin{coro}
 A corollary
\end{coro}

\begin{lemm}
  A lemma
\end{lemm}

%%% style "definition"
\begin{defi}
  A definition
\end{defi}

%%% style "remark"
\begin{rema}
  A remark
\end{rema}

\begin{exem}
  An example
\end{exem}

%%% Other theorem environnment can be defined thanks to
%%% \newtheorem{jolitheo}{Joli Théorème}
%%% shared numbering is achieved with
%%% \newtheorem[theo]{jolitheo}{Joli Théorème}
%%% before defining jolitheo you can choose its style with
%%% \theoremstyle{plain} or
%%% \theoremstyle{definition} or
%%% \theoremstyle{remark} 

\begin{proof}[Too short a proof]
  A demonstration
\end{proof}

%% printbibliography is the command from the package biblatex

\printshorthands %%% to remove in case you would not like to use shorthand
%%% Local Variables:
%%% mode: latex
%%% TeX-master: t
%%% End: